\theoremstyle{plain}
\newtheorem{theorem}{Theorem}[section]
\newtheorem*{theorem*}{Theorem}
\newtheorem{corollary}[theorem]{Corollary}
\newtheorem*{corollary*}{Corollary}
\newtheorem{lemma}[theorem]{Lemma}
\theoremstyle{definition}
\theoremstyle{remark}
\newcommand{\aut}{\operatorname{Aut}}
\begin{document}

\title[An infinite family of braid group representations]
{An infinite family of braid group representations in automorphism groups of free groups}

\author{Wonjun Chang}
\address{Wonjun Chang \\ Department of Mathematics \\ Inha University \\ Incheon, 22212, Republic of Korea}
\email{wonjun5@gmail.com}

\author{Byung Chun Kim}
\address{Byung Chun Kim \\ National Institute for Mathematical Sciences \\ Daejeon, 34047, Republic of Korea}
\email{wizardbc@gmail.com}

\author{Yongjin Song}
\address{Yongjin Song \\ Department of Mathematics \\ Inha University \\ Incheon, 22212, Republic of Korea}
\email{yjsong@inha.ac.kr}

\subjclass{57M12, 57M50}
\keywords{Braid group embedding, branched covering, mapping class group, automorphism of free group}

\begin{abstract}
  The $d$-fold ($d \geq 3$) branched coverings on a disk give an infinite family of nongeometric embeddings of braid groups into mapping class groups.
  We, in this paper, give new explicit expressions of these braid group representations into automorphism groups of free groups in terms of the actions on the generators of free groups.
  We also give a systematic way of constructing and expressing these braid group representations in terms of a new gadget, called covering groupoid.
  We prove that each generator $\widetilde{\beta}_i$ of braid group inside mapping class group induced by $d$-fold covering is the product of $d-1$ Dehn twists on the surface.
\end{abstract}

\maketitle

\section{Introduction}

  The classical embedding $\psi : B_n \rightarrow \Gamma_{g,1}$ of braid groups in mapping class groups was interpreted by Segal and Tillmann \cite{SegalTillmann2008} as being induced by a 2-fold (branched) covering over a disk.
  The question on the existence of such an embedding induced by 3-fold covering was raised by Tillmann about ten years ago, and the answer was given in \cite{KimSong2018}.
  Soon after the paper \cite{KimSong2018} was posted at math arXiv in February 2018,
  similar results to this were posted at arXiv by Ghaswala-McLeay \cite{Ghaswala2018}
  and Callegaro-Salvetti \cite{Salvetti2018} for arbitrary $d$-fold ($d \geq 3$) covering.
  It is surprising that the ten-year old problem was resolved independently almost at the same time by three different groups.
  The paper \cite{Ghaswala2018} laid emphasis on the development and application of the Birman-Hilden theory
  and the paper \cite{Salvetti2018} did on the computation of integral homology of braid group with coefficient in the first homology of the surface with this new action,
  whereas the paper \cite{KimSong2018} did on the proof of the homology triviality of the embedding.

  The classical embedding $\psi : B_n \rightarrow \Gamma_{g,b}$ maps each generator of braid group to a Dehn twist.
  Such an embedding is called a geometric embedding.
  Since the question whether there exists a nongeometric embedding was raised by Wajnryb (\cite{Wajnryb2006}),
  only a few particular examples have been found (\cite{JeongSong2013,Szepietowski2010,KimSong2018}).
  The embedding induced by $d$-fold covering ($d \geq 3$) over a disk, which turned out to be a product of $d-1$ Dehn twists, gives us an infinite family of nongeometric embeddings.
  On the other hand, since the mapping class group $\Gamma_{g,b}$ ($b \geq 1$) may be regarded as a subgroup of the automorphism group of the fundamental group of the surface which is a free group,
  we also could get a family of {\it faithful} representations of braid groups into the automorphim groups of free groups.
  However, it is not easy to find an explicit expression of these representations in terms of the action on the generators of free group.
  Such new expressions of braids as automorphisms of free group may give us (theoretical) new group invariants of links (\cite{Wada1992,CrispParis2005} )
  and add some new data in the theory of braid cryptography (\cite{Ko2000} ).

  In this paper we give three results.
  First, we give new explicit expressions of those braid group representations into automorphism groups of free groups in terms of the actions on the generators of free groups (Corollary~\ref{cor:yij}).

  \begin{corollary*}{\bf \ref{cor:yij}}
    Let $\phi_d : B_n \rightarrow \aut F_{(d-1)(n-1)}$, $\beta_i \mapsto \widetilde{\beta_i}$ be the (faithful) representation induced by the $d$-fold covering over a disk with $n$ branch points.
    Then $\widetilde{\beta_{i}}$ acts on the $(d-1)(n-1)$ generators $\{ x_{1,1}, \ldots, x_{n-1,d-1} \}$ of the free group as follows:
    $$
    \widetilde\beta_i:\left\{
    \begin{array}{rcl}
      x_{i-1, j} & \mapsto &  (x_{i, j+1})^{y_{i-1,j}} \cdot x_{i-1,j}\\
      \\
      x_{i, j} & \mapsto & (x_{i,j+1}^{-1})^{y_{i,j+1}^{-1} \cdot\, x_{i,1}}\\
      \\
      x_{i+1, j} & \mapsto & (x_{i+1,j})^{y_{i,j}} \cdot x_{i,j}

    \end{array}\right.
    $$
  \end{corollary*}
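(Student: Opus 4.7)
The plan is to derive the three formulas case by case from the main theorem of the paper, which expresses $\widetilde{\beta_i}$ at the level of the covering groupoid. First I would recall the geometric setup: the $d$-fold branched cover of a disk with $n$ marked points yields a surface $\Sigma$ with $\pi_1(\Sigma)\cong F_{(d-1)(n-1)}$, where the chosen generators $x_{k,j}$ (for $1\le k\le n-1$ and $1\le j\le d-1$) correspond to the lifts, indexed by sheet $j$, of canonical loops based at a chosen point and encircling the $k$-th branch point. The elements $y_{k,j}$ are the specific words in the $x_{k,j}$ arising from the connecting paths between sheets prescribed by the groupoid structure.

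Next, I would observe that the braid generator $\beta_i$ is supported in a small disk neighbourhood of $\{p_i,p_{i+1}\}$, so its lift $\widetilde{\beta_i}$ fixes every generator $x_{k,j}$ with $k\notin\{i-1,i,i+1\}$. Thus the corollary reduces to computing the action on exactly these three families of generators. In each case I would specialize the groupoid-theoretic formula for $\widetilde{\beta_i}$ supplied by the main theorem: one traces the image of the lifted loop after the half-twist, writing the result as a conjugated copy of an adjacent generator times the original generator (or its inverse), which yields the three displayed formulas.

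The detailed computation is mostly bookkeeping. The shift $j\mapsto j+1$ in the subscripts encodes the one-step monodromy incurred by crossing the twisted region, and the conjugations by $y_{i-1,j}$, $y_{i,j+1}^{-1}\cdot x_{i,1}$, and $y_{i,j}$ are the base-point correction paths needed after applying the deck transformation. The principal obstacle is ensuring that the sheet indices and orientation conventions match consistently across all three cases and for every $j$; once the groupoid machinery of the main theorem is in hand, each case unwinds directly to the stated word in the $x_{k,j}$. The remaining generators being fixed is immediate from the local support of $\beta_i$, so the three displayed formulas together fully determine $\widetilde{\beta_i}$.
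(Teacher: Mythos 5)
Your proposal follows the paper's own route: the paper obtains this corollary by applying the groupoid-level formula for $\widetilde\beta_i$ (Theorem~\ref{thm:lift}) to each of the three families of loops $x_{i-1,j}$, $x_{i,j}$, $x_{i+1,j}$ written as $p_k\cdot(e_{k,j}e_{k,j+1}^{-1})\cdot p_k^{-1}$, which is exactly the case-by-case unwinding you describe (carried out in Theorem~\ref{thm:loops}), followed by rewriting the resulting words via $y_{i,j}=x_{i,1}\cdots x_{i,j-1}$ and the convention $x^y=y^{-1}xy$. The only thing left implicit in your write-up is the actual bookkeeping, which is where all the content lies, but the strategy and the observation that generators away from $i-1,i,i+1$ are fixed match the paper.
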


  The generators of the free group have been determined from the covering groupoid.
  The faithfullness of these representations is shown by the Birman-Hilden theory.

  Second, we give a more systematic way of constructing and expressing the braid group embedding induced by arbitrary $d$-fold covering.
  For this we introduce a new gadget, called {\it covering groupoid} which is a directed graph and also regarded as a subcategory of the fundamental groupoid of the surface.

  Third, we prove that each generator $\widetilde{\beta}_i$ of braid group inside mapping class group induced by $d$-fold covering is the product of $d-1$ Dehn twists ({\it not} the inverse of Dehn twists) along closed curves which form a chain:

  \begin{theorem*}{\bf \ref{thm:Dehn}}
    The lift $\widetilde\beta_i$ equals $D_{x_{i, 2}}\cdot D_{x_{i,3}}\cdot \cdots \cdot D_{x_{i,d}}$ as an element of mapping class group.
  \end{theorem*}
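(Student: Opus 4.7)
The plan is to use the fact that for a surface with nonempty boundary the mapping class group $\Gamma_{g,1}$ embeds faithfully into $\aut \pi_1(\Sigma_{g,1})$. To establish the identity $\widetilde\beta_i = D_{x_{i,2}} \cdot D_{x_{i,3}} \cdots D_{x_{i,d}}$ as mapping classes, it therefore suffices to show that both sides induce the same automorphism of the free group on the generators $\{x_{1,1}, \ldots, x_{n-1,d-1}\}$ produced by the covering groupoid. Since Corollary~\ref{cor:yij} already records the action of the left-hand side, the theorem reduces to a direct comparison of actions on these generators.

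First, I would identify each generator $x_{i,j}$ geometrically with an embedded simple closed curve on the branched cover, and verify that the specific curves $x_{i,2}, x_{i,3}, \ldots, x_{i,d}$ form a chain of length $d-1$: consecutive curves meet once transversely and non-consecutive curves are disjoint. The expected picture is that the preimage in the $d$-fold cover of a short arc in the base disk joining the two adjacent branch points $p_i, p_{i+1}$ decomposes into exactly $d-1$ components, which are by construction the $x_{i,j}$; their cyclic arrangement around the branched preimages of $p_i$ and $p_{i+1}$ forces the chain intersection pattern. This step is essentially combinatorial, reading off the covering groupoid data that was already used to produce the free group generators.

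Next, using the standard formula that a Dehn twist $D_\gamma$ fixes every loop disjoint from $\gamma$ and modifies a transverse loop by inserting $\gamma$ at each intersection (with a conjugating prefix determined by the subarc from the basepoint), I would apply $D_{x_{i,2}} \cdots D_{x_{i,d}}$ in order to each generator $x_{k,\ell}$. Generators with $k \notin \{i-1, i, i+1\}$ are supported away from the chain and are therefore fixed, matching the implicit fixing in the Corollary. For $k = i-1$ and $k = i+1$, each relevant generator meets only one curve of the chain, so a single application of the twist formula should yield the first and third lines of Corollary~\ref{cor:yij} after a short calculation. The positive (rather than inverse) sign of each Dehn twist is forced by matching the exponent pattern in those lines.

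The main obstacle will be the middle case $k = i$. The generator $x_{i,\ell}$ geometrically crosses several curves of the chain, and the Corollary records its image as a conjugate of $x_{i,\ell+1}^{-1}$ by a word built from the auxiliary elements $y_{i,j}$. Reproducing this formula will require carefully tracking, in sequence, the contribution of each successive twist $D_{x_{i,\ell+1}}, D_{x_{i,\ell+2}}, \ldots, D_{x_{i,d}}$ and simplifying the accumulated conjugator via the covering groupoid relations that define the $y_{i,j}$. Once this bookkeeping is reconciled with the middle line of Corollary~\ref{cor:yij}, the three lines together imply that $D_{x_{i,2}} \cdots D_{x_{i,d}}$ and $\widetilde\beta_i$ induce the same automorphism of the free group, hence are equal in $\Gamma_{g,1}$.
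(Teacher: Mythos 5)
Your overall strategy --- reduce the identity in the mapping class group to an equality of induced automorphisms on a generating set, then grind out the twist computation --- is the same as the paper's. The paper, however, runs the comparison on the covering groupoid rather than on $\pi_1$: Theorem~\ref{thm:lift} records the action of $\widetilde\beta_i$ on the edges $e_{k,j}$, Lemma~\ref{lem:dehn} records the action of each $D_{x_{i,j}}$ on the edges, and the theorem is then obtained by composing the latter $d-1$ times and matching the former (a verification the paper itself defers to ``hand calculation (or computer program)''). Working with edges is computationally lighter than working with the based loops $x_{k,\ell}$ of Corollary~\ref{cor:yij}, whose tails $p_k$ pass through the twisting region and make the intersection bookkeeping you describe genuinely delicate, especially in the middle case you flag.

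There are two concrete gaps. First, the reduction ``it suffices to show both sides induce the same automorphism of the free group'' rests on injectivity of $\Gamma_{g,b}\to\aut(\pi_1)$, which fails when $b=\gcd(d,n)\geq 2$: a Dehn twist about a curve parallel to a boundary component not containing the basepoint acts trivially on $\pi_1$ (every based loop can be homotoped off its supporting annulus), so two mapping classes with the same $\pi_1$-action may still differ by a boundary multitwist. You would need a supplementary argument --- for instance, that both sides are supported in the atomic subsurface $S_i$, away from $\partial S_{g,b}^{n}$ --- or else compare actions on the covering groupoid as the paper does, since its basepoints $0_{(j)}$, $(n+1)_{(j)}$ lie on every boundary component and therefore do detect boundary twists. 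Second, the preimage of the arc joining the branch points $i$ and $i+1$ is not a disjoint union of $d-1$ components: it is a connected graph, namely the $d$ arcs $e_{i,1},\ldots,e_{i,d}$ all sharing both ramification points. The curves $x_{i,2},\ldots,x_{i,d}$ do form a chain, but this follows from the alternating cyclic order of in-edges and out-edges at the ramification points (Figure~\ref{fig:pattern}), not from the decomposition you assert. Neither issue is unfixable, but as written the proposal misstates the topology it relies on and leaves the injectivity reduction unjustified in exactly the cases where it is false.
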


\section{Mapping class groups and branched covering over a disk}

  Let $S_{g,b}$ be an oriented surface with genus $g$ and having $b$ boundary components.
  Let $\Gamma_{g,b}$ be the mapping class group of $S_{g,b}$,
  i.e. the group of isotopy classes of orientation-preserving self-homeomorphisms of $S_{g,b}$, fixing the boundary pointwise.
  It is generated by a finite number of Dehn twists.

  Let $\Gamma_{g,b}^n$ denote the mapping class group of the surface $S_{g,b}^n$ with $n$ marked points.
  Let $\Gamma_{g,b}^{(n)}$ denote the mapping class group of self-homeomorphisms which may permutate the $n$ marked points.
  Note that the braid group $B_n$ may be defined to be $\Gamma_{0,1}^{(n)}$
  and the standard generators $\{ \beta_1, \ldots \beta_{n-1} \}$ of $B_n$ are given as half Dehn twists.
  The generators $\beta_i$ satisfy the {\it braid relations}
  $\beta_i \beta_{i+1} \beta_i = \beta_{i+1}\beta_i \beta_{i+1}$ and $\beta_i\beta_j = \beta_j\beta_i \textrm{ if } |i - j| \geq 2$.
  The generator $\beta_i$ is a $180^\circ$ rotation interchanging the marked points $i$ and $i+1$ of the disk as shown in Figure~\ref{fig:halfDehn}.

  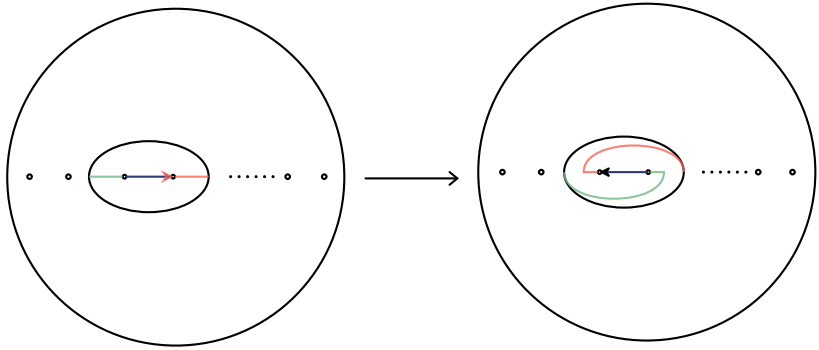
\begin{figure}[ht]
    \centering
    \definecolor{c2f3b73}{RGB}{47,59,115}
\definecolor{cf56356}{RGB}{245,99,86}
\definecolor{c74b587}{RGB}{116,181,135}

\begin{tikzpicture}[y=0.80pt, x=0.80pt, yscale=-1.000000, xscale=1.000000, inner sep=0pt, outer sep=0pt]
\path[draw=black,line join=round,line cap=round,line width=0.800pt]
  (224.6400,563.7600) .. controls (224.6400,554.4810) and (237.3090,546.9580) ..
  (252.9380,546.9580) .. controls (268.5660,546.9580) and (281.2350,554.4810) ..
  (281.2350,563.7600) .. controls (281.2350,573.0390) and (268.5660,580.5620) ..
  (252.9380,580.5620) .. controls (237.3090,580.5620) and (224.6400,573.0390) ..
  (224.6400,563.7600) -- cycle;
\path[draw=black,line join=round,line cap=round,line width=0.800pt]
  (240.5570,563.7600) .. controls (240.5570,563.2720) and (240.9530,562.8760) ..
  (241.4420,562.8760) .. controls (241.9300,562.8760) and (242.3260,563.2720) ..
  (242.3260,563.7600) .. controls (242.3260,564.2490) and (241.9300,564.6440) ..
  (241.4420,564.6440) .. controls (240.9530,564.6440) and (240.5570,564.2490) ..
  (240.5570,563.7600) -- cycle;
\path[draw=black,line join=round,line cap=round,line width=0.800pt]
  (263.5490,563.7600) .. controls (263.5490,563.2720) and (263.9450,562.8760) ..
  (264.4340,562.8760) .. controls (264.9220,562.8760) and (265.3180,563.2720) ..
  (265.3180,563.7600) .. controls (265.3180,564.2490) and (264.9220,564.6440) ..
  (264.4340,564.6440) .. controls (263.9450,564.6440) and (263.5490,564.2490) ..
  (263.5490,563.7600) -- cycle;
  \path[draw=c2f3b73,line join=round,line cap=round,line width=0.800pt]
    (261.8160,563.7600) .. controls (259.8240,563.7600) and (242.3260,563.7600) ..
    (242.3260,563.7600);
  \path[draw=cf56356,fill=c2f3b73,draw opacity=0.799,line width=0.800pt]
    (261.3820,563.7600) -- (260.0830,565.4930) -- (263.5480,563.7600) --
    (260.0830,562.0270) -- (261.3820,563.7600) -- cycle;
\path[draw=c74b587,draw opacity=0.798,line width=0.800pt] (224.6400,563.7600) --
  (240.5570,563.7600);
\path[draw=cf56356,draw opacity=0.799,line width=0.800pt] (265.3180,563.7600) --
  (281.2350,563.7600);
\path[draw=black,line join=round,line cap=round,line width=0.800pt]
  (449.2800,561.6000) .. controls (449.2800,552.3210) and (461.9490,544.7980) ..
  (477.5780,544.7980) .. controls (493.2060,544.7980) and (505.8750,552.3210) ..
  (505.8750,561.6000) .. controls (505.8750,570.8790) and (493.2060,578.4020) ..
  (477.5780,578.4020) .. controls (461.9490,578.4020) and (449.2800,570.8790) ..
  (449.2800,561.6000) -- cycle;
\path[draw=black,line join=round,line cap=round,line width=0.800pt]
  (465.1970,561.6000) .. controls (465.1970,561.1120) and (465.5930,560.7160) ..
  (466.0820,560.7160) .. controls (466.5700,560.7160) and (466.9660,561.1120) ..
  (466.9660,561.6000) .. controls (466.9660,562.0890) and (466.5700,562.4840) ..
  (466.0820,562.4840) .. controls (465.5930,562.4840) and (465.1970,562.0890) ..
  (465.1970,561.6000) -- cycle;
\path[draw=black,line join=round,line cap=round,line width=0.800pt]
  (488.1890,561.6000) .. controls (488.1890,561.1120) and (488.5850,560.7160) ..
  (489.0740,560.7160) .. controls (489.5620,560.7160) and (489.9580,561.1120) ..
  (489.9580,561.6000) .. controls (489.9580,562.0890) and (489.5620,562.4840) ..
  (489.0740,562.4840) .. controls (488.5850,562.4840) and (488.1890,562.0890) ..
  (488.1890,561.6000) -- cycle;
  \path[draw=c2f3b73,line join=round,line cap=round,line width=0.800pt]
    (468.7000,561.6000) .. controls (470.6920,561.6000) and (488.1890,561.6000) ..
    (488.1890,561.6000);
  \path[draw=black,fill=c2f3b73,line join=round,line cap=round,line width=0.800pt]
    (469.1330,561.6000) -- (470.4330,559.8670) -- (466.9670,561.6000) --
    (470.4330,563.3330) -- (469.1330,561.6000) -- cycle;
\path[draw=c74b587,draw opacity=0.798,line width=0.800pt] (449.2800,561.6000) ..
  controls (449.2800,578.4020) and (496.5250,578.4020) .. (496.5250,561.6000) --
  (489.9580,561.6000);
\path[draw=cf56356,draw opacity=0.799,line width=0.800pt] (465.1970,561.6000) --
  (458.5260,561.6000) .. controls (458.5260,544.7980) and (505.8750,544.7980) ..
  (505.8750,561.6000);
\path[draw=black,line join=round,line cap=round,line width=0.800pt]
  (185.9900,563.9900) .. controls (185.9900,608.0020) and (221.6680,643.6800) ..
  (265.6800,643.6800) .. controls (309.6920,643.6800) and (345.3700,608.0020) ..
  (345.3700,563.9900) .. controls (345.3700,519.9790) and (309.6920,484.3000) ..
  (265.6800,484.3000) .. controls (221.6680,484.3000) and (185.9900,519.9790) ..
  (185.9900,563.9900) -- cycle;
\path[draw=black,line join=round,line cap=round,line width=0.800pt]
  (334.8000,563.7600) .. controls (334.8000,563.1630) and (335.2840,562.6800) ..
  (335.8800,562.6800) .. controls (336.4760,562.6800) and (336.9600,563.1630) ..
  (336.9600,563.7600) .. controls (336.9600,564.3560) and (336.4760,564.8400) ..
  (335.8800,564.8400) .. controls (335.2840,564.8400) and (334.8000,564.3560) ..
  (334.8000,563.7600) -- cycle;
\path[draw=black,line join=round,line cap=round,line width=0.800pt]
  (195.4800,563.7600) .. controls (195.4800,563.1630) and (195.9640,562.6800) ..
  (196.5600,562.6800) .. controls (197.1560,562.6800) and (197.6400,563.1630) ..
  (197.6400,563.7600) .. controls (197.6400,564.3560) and (197.1560,564.8400) ..
  (196.5600,564.8400) .. controls (195.9640,564.8400) and (195.4800,564.3560) ..
  (195.4800,563.7600) -- cycle;
\path[draw=black,line join=round,line cap=round,line width=0.800pt]
  (317.5200,563.7600) .. controls (317.5200,563.1630) and (318.0040,562.6800) ..
  (318.6000,562.6800) .. controls (319.1960,562.6800) and (319.6800,563.1630) ..
  (319.6800,563.7600) .. controls (319.6800,564.3560) and (319.1960,564.8400) ..
  (318.6000,564.8400) .. controls (318.0040,564.8400) and (317.5200,564.3560) ..
  (317.5200,563.7600) -- cycle;
\path[draw=black,line join=round,line cap=round,line width=0.800pt]
  (213.8400,563.7600) .. controls (213.8400,563.1630) and (214.3240,562.6800) ..
  (214.9200,562.6800) .. controls (215.5160,562.6800) and (216.0000,563.1630) ..
  (216.0000,563.7600) .. controls (216.0000,564.3560) and (215.5160,564.8400) ..
  (214.9200,564.8400) .. controls (214.3240,564.8400) and (213.8400,564.3560) ..
  (213.8400,563.7600) -- cycle;
  \path[draw=black,line join=round,line cap=round,line width=0.800pt]
    (398.3100,564.5800) .. controls (395.0840,564.5800) and (355.3760,564.5800) ..
    (355.3760,564.5800);
  \path[fill=black] (395.4600,567.9670) -- (399.1270,564.9670) --
    (399.6000,564.5800) -- (399.1270,564.1930) -- (395.4600,561.1930) .. controls
    (395.2470,561.0180) and (394.9320,561.0500) .. (394.7570,561.2630) .. controls
    (394.5820,561.4770) and (394.6130,561.7920) .. (394.8270,561.9670) --
    (398.4940,564.9670) -- (398.4940,564.1930) -- (394.8270,567.1930) .. controls
    (394.6130,567.3680) and (394.5820,567.6830) .. (394.7570,567.8970) .. controls
    (394.9320,568.1100) and (395.2470,568.1420) .. (395.4600,567.9670) -- cycle;
\path[draw=black,line join=round,line cap=round,line width=0.800pt]
  (408.7000,561.6000) .. controls (408.7000,605.6120) and (444.3780,641.2900) ..
  (488.3900,641.2900) .. controls (532.4020,641.2900) and (568.0800,605.6120) ..
  (568.0800,561.6000) .. controls (568.0800,517.5890) and (532.4020,481.9100) ..
  (488.3900,481.9100) .. controls (444.3780,481.9100) and (408.7000,517.5890) ..
  (408.7000,561.6000) -- cycle;
\path[draw=black,line join=round,line cap=round,line width=0.800pt]
  (556.2000,561.6000) .. controls (556.2000,561.0030) and (556.6830,560.5200) ..
  (557.2800,560.5200) .. controls (557.8760,560.5200) and (558.3600,561.0030) ..
  (558.3600,561.6000) .. controls (558.3600,562.1960) and (557.8760,562.6800) ..
  (557.2800,562.6800) .. controls (556.6830,562.6800) and (556.2000,562.1960) ..
  (556.2000,561.6000) -- cycle;
\path[draw=black,line join=round,line cap=round,line width=0.800pt]
  (540.0000,561.6000) .. controls (540.0000,561.0030) and (540.4840,560.5200) ..
  (541.0800,560.5200) .. controls (541.6760,560.5200) and (542.1600,561.0030) ..
  (542.1600,561.6000) .. controls (542.1600,562.1960) and (541.6760,562.6800) ..
  (541.0800,562.6800) .. controls (540.4840,562.6800) and (540.0000,562.1960) ..
  (540.0000,561.6000) -- cycle;
\path[draw=black,dash pattern=on 0.00pt off 3.20pt,line join=round,line
  cap=round,line width=1.200pt] (291.6000,563.7600) -- (315.3600,563.7600);
\path[draw=black,dash pattern=on 0.00pt off 3.20pt,line join=round,line
  cap=round,line width=1.200pt] (515.1600,561.6000) -- (538.9200,561.6000);
\path[draw=black,line join=round,line cap=round,line width=0.800pt]
  (419.0400,561.6000) .. controls (419.0400,561.0030) and (419.5240,560.5200) ..
  (420.1200,560.5200) .. controls (420.7160,560.5200) and (421.2000,561.0030) ..
  (421.2000,561.6000) .. controls (421.2000,562.1960) and (420.7160,562.6800) ..
  (420.1200,562.6800) .. controls (419.5240,562.6800) and (419.0400,562.1960) ..
  (419.0400,561.6000) -- cycle;
\path[draw=black,line join=round,line cap=round,line width=0.800pt]
  (437.4000,561.6000) .. controls (437.4000,561.0030) and (437.8830,560.5200) ..
  (438.4800,560.5200) .. controls (439.0760,560.5200) and (439.5600,561.0030) ..
  (439.5600,561.6000) .. controls (439.5600,562.1960) and (439.0760,562.6800) ..
  (438.4800,562.6800) .. controls (437.8830,562.6800) and (437.4000,562.1960) ..
  (437.4000,561.6000) -- cycle;

\end{tikzpicture}
    \caption{A half Dehn twist acting on a disk}
    \label{fig:halfDehn}
  \end{figure}

  Mapping class groups are generated by full Dehn twists. Let $\alpha$ be a simple closed curve in an orientable surface $S$. Let $D_{\alpha}$ denote the Dehn twist along $\alpha$.
  For recalling the orientation of the twist of $D_{\alpha}$, see Figure~\ref{fig:Dehn}.

  \begin{figure}[ht]
    \centering
    \definecolor{cff0025}{RGB}{255,0,37}

\begin{tikzpicture}[y=0.80pt, x=0.80pt, yscale=-1.000000, xscale=1.000000, inner sep=0pt, outer sep=0pt]
  \path[draw=black,line width=0.480pt] (225.6740,169.6330);
  \path[draw=black,line width=0.480pt] (271.3200,203.0400) .. controls
    (271.3200,184.5900) and (275.0590,169.6330) .. (279.6720,169.6330);
  \begin{scope}[shift={(-3.0,0)}]
      \path[draw=black,line width=0.480pt] (274.6330,211.5230) .. controls
        (274.4290,208.6310) and (274.3200,205.5790) .. (274.3200,202.4250);
      \path[fill=black] (272.4810,209.9670) -- (274.4380,212.0280) --
        (274.6900,212.2940) -- (274.9010,211.9940) -- (276.5340,209.6680) .. controls
        (276.6290,209.5320) and (276.5960,209.3450) .. (276.4610,209.2500) .. controls
        (276.3250,209.1540) and (276.1380,209.1870) .. (276.0430,209.3230) --
        (274.4100,211.6490) -- (274.8730,211.6150) -- (272.9160,209.5540) .. controls
        (272.8020,209.4340) and (272.6120,209.4290) .. (272.4920,209.5430) .. controls
        (272.3720,209.6570) and (272.3670,209.8470) .. (272.4810,209.9670) -- cycle;
  \end{scope}
  \path[draw=black,line width=0.480pt] (279.6310,236.4090) .. controls
    (275.8770,236.4090) and (272.7020,226.5040) .. (271.6490,212.8710);
  \path[draw=black,line width=0.480pt] (279.9600,169.6330) .. controls
    (284.5730,169.6330) and (288.3120,184.5900) .. (288.3120,203.0400) .. controls
    (288.3120,221.4900) and (284.5730,236.4470) .. (279.9600,236.4470);
  \path[draw=black,line width=0.480pt] (309.1910,169.6330);
  \path[draw=black,line width=0.480pt] (225.6740,169.6330) -- (280.6150,169.6330);
  \path[draw=black,line width=0.480pt] (225.6740,236.4470) -- (280.6980,236.4470);
  \path[draw=cff0025,line width=0.480pt] (253.7520,236.4470) .. controls
    (249.1390,236.4470) and (245.4000,221.4900) .. (245.4000,203.0400) .. controls
    (245.4000,184.5900) and (249.1390,169.6330) .. (253.7520,169.6330);
  \path[draw=cff0025,dash pattern=on 1.60pt,line width=0.480pt]
    (254.0400,169.6330) .. controls (258.6530,169.6330) and (262.3920,184.5900) ..
    (262.3920,203.0400) .. controls (262.3920,221.4900) and (258.6530,236.4470) ..
    (254.0400,236.4470);
  \path[draw=black,line width=0.480pt] (225.6730,236.4470) .. controls
    (221.0610,236.4470) and (217.3220,221.4900) .. (217.3220,203.0400) .. controls
    (217.3220,184.5900) and (221.0610,169.6330) .. (225.6730,169.6330);
  \path[draw=black,dash pattern=on 1.60pt,line width=0.480pt] (225.6730,169.6330)
    .. controls (230.2860,169.6330) and (234.0250,184.5900) .. (234.0250,203.0400)
    .. controls (234.0250,221.4900) and (230.2860,236.4470) ..
    (225.6730,236.4470);
  \begin{scope}[shift={(-3.0,0)}]
      \path[draw=black,line width=0.480pt] (270.0000,156.2940) .. controls
        (270.0000,158.0470) and (270.0000,178.4570) .. (270.0000,178.4570);
      \path[fill=black] (272.0320,158.0040) -- (270.2320,155.8040) --
        (270.0000,155.5210) -- (269.7680,155.8040) -- (267.9680,158.0040) .. controls
        (267.8630,158.1330) and (267.8820,158.3220) .. (268.0100,158.4260) .. controls
        (268.1380,158.5310) and (268.3270,158.5120) .. (268.4320,158.3840) --
        (270.2320,156.1840) -- (269.7680,156.1840) -- (271.5680,158.3840) .. controls
        (271.6730,158.5120) and (271.8620,158.5310) .. (271.9900,158.4260) .. controls
        (272.1180,158.3220) and (272.1370,158.1320) .. (272.0320,158.0040) -- cycle;
  \end{scope}
  \path[fill=black] (292.3600,187.9200) node[above right] (text52) {$360^{\circ}
    \textrm{rotation}$};
  \begin{scope}[shift={(-3.0,0)}]
    \path[fill=black] (194.4000,220.3200) -- (213.8400,209.5200);
      \path[draw=black,line width=0.480pt] (213.1630,209.8960) .. controls
        (211.6640,210.7290) and (194.4000,220.3200) .. (194.4000,220.3200);
      \path[fill=black] (212.6550,212.5030) -- (213.7040,209.8610) --
        (213.8390,209.5200) -- (213.4790,209.4550) -- (210.6810,208.9500) .. controls
        (210.5180,208.9210) and (210.3620,209.0290) .. (210.3330,209.1920) .. controls
        (210.3030,209.3550) and (210.4120,209.5110) .. (210.5750,209.5400) --
        (213.3720,210.0460) -- (213.1460,209.6400) -- (212.0970,212.2820) .. controls
        (212.0360,212.4360) and (212.1120,212.6100) .. (212.2660,212.6710) .. controls
        (212.4200,212.7320) and (212.5940,212.6570) .. (212.6550,212.5030) -- cycle;
  \end{scope}
  \path[fill=black] (178.4400,235.4000) node[above right] (text66)
    {$\textrm{Fix}$};
  \path[fill=black] (236.7600,140.4000) node[above right] (text70)
    {$\textrm{Outward normal}$};
  \path[fill=black] (236.7600,198.3600) node[above right] (text74) {$\alpha$};
  \path[xscale=1.042,yscale=0.959,fill=black,line width=0.235pt]
    (202.8617,171.6118) node[above right] (text1045) {$N$};

\end{tikzpicture}
    \caption{Dehn twist along the curve $\alpha$}
    \label{fig:Dehn}
  \end{figure}
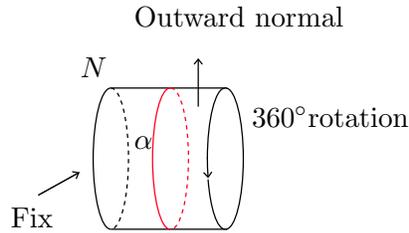

  \subsection{Cyclic $d$-fold branched cover over a disk}

    Let $\pi: S_g\rightarrow S^{2} \cong \mathbb C \cup \{\infty\}$ be a cyclic branched cover
    (or superelliptic curve) modelled by $$y^d=(z-1)(z-2)\cdots(z-n)$$ on a sphere.
    Then there is only one ramification point over each  $i$ of index $d$ ($i=1,2,\ldots,n$).
    And there are $\gcd(d,n)$ ramification points over $\infty$ of index $\frac{d}{\gcd(d,n)}$ .
    Riemann-Hurwitz formula gives $$\chi(S_g) = d-dn-n+\gcd(d,n).$$
    Since $\chi(S_g) = 2-2g$, $$g=\frac{dn-n-d-\gcd(d,n)}{2}+1.$$
    A {\it $d$-fold branched cover with $n$ branch points} over a disk
    $$S_{0,1}^{n}=\left\{z : \left|z-\frac{n+1}{2}\right|\leq\frac{n+1}{2}\right\}$$
    with $n$ marked points $\{1,2,\ldots,n\}$
    is the restriction $\pi|_{E}:E\rightarrow S_{0,1}^{n}$ where $E=\pi^{-1}\left(S_{0,1}^{n}\right)$.
    The boundary of the disk $\partial S_{0,1}^{n}$ can be thought of as a loop around $\infty$ on $S^{2}$.
    So $E$ has the same number of boundary components as the number of the ramification points over $\infty$.
    Therefore $E\cong S_{g,b}^{n}$ where $g=\frac{dn-n-d-\gcd(d,n)}{2}+1$ and $b=\gcd(d,n)$.
    (Tables~\ref{tab:4fold} and \ref{tab:5fold})


    \begin{table}[h]
      \centering
      \begin{tabular}{c  c  c  c  c  c  c  c  c}
      \hline
      $n$ & 1 & 2 & 3 & 4 & 5 & 6 & 7 & 8 \\ [0.5ex]
      \hline
      $b$ & 1 & 2 & 1 & 4 & 1 & 2 & 1 & 4 \\
      $g$ & 0 & 1 & 3 & 3 & 6 & 7 & 9 & 9 \\
      \hline
      \end{tabular}
      \caption{4-fold branched covering space with $n$ branch points}
      \label{tab:4fold}
    \end{table}

    \begin{table}[h]
      \centering
      \begin{tabular}{c  c  c  c  c  c  c  c  c}
      \hline
      $n$ & 1 & 2 & 3 & 4 & 5 & 6 & 7 & 8 \\ [0.5ex]
      \hline
      $b$ & 1 & 1 & 1 & 1 & 5 & 1 & 1 & 1 \\
      $g$ & 0 & 2 & 4 & 6 & 6 & 10 & 12 & 14 \\
      \hline
      \end{tabular}
      \caption{5-fold branched covering space with $n$ branch points}
      \label{tab:5fold}
    \end{table}

  \subsection{Covering Groupoid}

    For a branched cover over a disk, we are going to use the language of groupoid.

    Let $\pi:E\rightarrow D$ be a $d$-fold branched covering with $n$ branch points $\{ 1, \ldots, n\}$.
    This disk $D$ may be interpreted as a groupoid $D$ (abuse of notation) which is a directed graph such that the set of vertices is $\{0,1,\ldots,n,n+1\}$ and the edges are generated by $\left\{i\xrightarrow{e_i} i+1\right\}_{i=0,1,\ldots,n}$.
    The points 0 and $n$ are thought to lie at the boundary of the disk.

    The groupoid $E$ (again, abuse of notation) corresponding to $\pi$, called {\it covering groupoid}, is defined as follows:
    $$
    E = \left\{
    \begin{array}{rcl}
    \textrm{vertices} & : & \left\{1, 2,\ldots,n\right\}\quad\bigcup\quad \left\{0_{(j)}, (n+1)_{(j)}\right\}_{j=1,2,\ldots,d}\\
    \textrm{edges} & : & \textrm{generated by }
    \left\{i\xrightarrow{e_{i, j}} i+1\right\}_{i=0,1,\ldots,n,\  j=1,2,\ldots,d}\\
    \end{array}\right.
    $$
    where $0\xrightarrow{e_{0,j}} 1$ and $n\xrightarrow{e_{n, j}} n+1$ mean
    $0_{(j)}\xrightarrow{e_{0,j}} 1$ and $n\xrightarrow{e_{n, j}} (n+1)_{(j)}$, respectively,
    and the index $j$ reads modulo $d$.

    Note that for a covering $\pi:E\rightarrow D$, the surface $E$ may be obtained as a tubular neighborhood of the graph $E$.

    In $D$, between two branch points $i$ and $i+1$ there is only one edge $e_i$,
    whereas in $E$ between $i$ and $i+1$ there are $d$ edges $e_{i,1}, e_{i,2}, \ldots, e_{i,d}$.
    And in $E$ at the point $i+1$, in-edges $e_{i,1}, \ldots, e_{i,d}$ and out-edges $e_{i+1,1}, \ldots, e_{i+1,d}$ should meet alternately (Figure~\ref{fig:pattern}).

    \begin{figure}[ht]
      \centering
      \begin{tikzpicture}[y=0.80pt, x=0.80pt, yscale=-1.000000, xscale=1.000000, inner sep=0pt, outer sep=0pt]
\path[draw=black,fill=black,opacity=0.799,line width=1.600pt]
  (134.6290,255.5760) -- (273.5350,280.0690);
\path[draw=black,fill=black,opacity=0.799,line width=1.600pt]
  (134.6290,256.0610) -- (273.5350,231.5680);
\path[draw=black,fill=black,opacity=0.799,line width=1.600pt]
  (134.7400,255.6760) -- (256.8920,185.1510);
\path[draw=black,fill=black,opacity=0.799,line width=1.600pt]
  (135.5190,255.5760) -- (257.6710,326.1010);
\path[draw=black,opacity=0.799,line width=1.600pt] (209.6230,262.7050) --
  (214.4260,269.6470) -- (207.5390,274.5270);
\path[draw=black,opacity=0.799,line width=1.600pt] (209.8970,236.7110) --
  (216.9610,241.3300) -- (212.4210,248.4460);
\path[draw=black,opacity=0.799,line width=1.600pt] (202.7650,288.7650) ..
  controls (204.9160,296.9270) and (204.9160,296.9270) .. (204.9160,296.9270) --
  (196.7780,299.1700);
\path[draw=black,opacity=0.799,line width=1.600pt] (201.2010,210.4760) --
  (209.4420,212.3030) -- (207.7070,220.5640);
\path[draw=black,dash pattern=on 0.80pt off 0.80pt,opacity=0.799,line
  width=0.800pt] (154.0080,276.5330) .. controls (147.6930,282.3500) and
  (138.8330,285.2590) .. (129.7250,283.6530) .. controls (114.2180,280.9190) and
  (103.8640,266.1320) .. (106.5990,250.6250) .. controls (109.3330,235.1190) and
  (124.1200,224.7650) .. (139.6260,227.4990) .. controls (146.8230,228.7680) and
  (152.9110,232.6340) .. (157.0940,237.9630);
\path[draw=black,fill=black,opacity=0.799,line width=1.600pt]
  (508.4480,261.4540) -- (370.4990,232.0460);
\path[draw=black,fill=black,opacity=0.799,line width=1.600pt]
  (508.4480,261.2860) -- (368.7600,280.8330);
\path[draw=black,fill=black,opacity=0.799,line width=1.600pt]
  (508.4480,262.0560) -- (383.8700,328.2000);
\path[draw=black,fill=black,opacity=0.799,line width=1.600pt]
  (508.6890,260.6350) -- (389.1170,185.8190);
\path[draw=black,opacity=0.799,line width=1.600pt] (430.4520,238.6900) --
  (435.0050,245.7970) -- (427.9490,250.4300);
\path[draw=black,opacity=0.799,line width=1.600pt] (427.2700,265.4600) --
  (434.1660,270.3270) -- (429.3760,277.2780);
\path[draw=black,opacity=0.799,line width=1.600pt] (445.5830,214.1640) ..
  controls (447.4060,222.4060) and (447.4060,222.4060) .. (447.4060,222.4060) --
  (439.1850,224.3210);
\path[draw=black,opacity=0.799,line width=1.600pt] (440.4250,304.8680) --
  (432.2540,302.7500) -- (434.2800,294.5550);
\path[draw=black,dash pattern=on 0.80pt off 0.80pt,opacity=0.799,line
  width=0.800pt] (490.1300,240.4270) .. controls (496.6480,234.8380) and
  (505.6050,232.2450) .. (514.6510,234.1730) .. controls (530.0500,237.4560) and
  (539.8730,252.6010) .. (536.5900,268.0000) .. controls (533.3070,283.4000) and
  (518.1620,293.2230) .. (502.7620,289.9400) .. controls (495.6140,288.4160) and
  (489.6680,284.3370) .. (485.6760,278.8630);
\path[cm={{2.10842,0.0,0.0,2.10842,(119.623,250.067)}},fill=black,opacity=0.799]
  (0.0000,0.0000) node[above right] (text46) {$i$};
\path[cm={{2.10842,0.0,0.0,2.10842,(511.449,259.055)}},fill=black,opacity=0.799]
  (0.0000,0.0000) node[above right] (text50) {$i+1$};
\path[cm={{1.80948,0.0,0.0,1.80948,(228.96,186.347)}},fill=black,opacity=0.799]
  (0.0000,0.0000) node[above right] (text54) {$e_{i,3}$};
\path[cm={{1.80948,0.0,0.0,1.80948,(250.56,231.12)}},fill=black,opacity=0.799]
  (0.0000,0.0000) node[above right] (text58) {$e_{i,2}$};
\path[cm={{1.80948,0.0,0.0,1.80948,(254.88,272.747)}},fill=black,opacity=0.799]
  (0.0000,0.0000) node[above right] (text62) {$e_{i,1}$};
\path[cm={{1.80948,0.0,0.0,1.80948,(244.08,318.107)}},fill=black,opacity=0.799]
  (0.0000,0.0000) node[above right] (text66) {$e_{i,d}$};
\path[cm={{1.80948,0.0,0.0,1.80948,(374.966,181.734)}},fill=black,opacity=0.799]
  (0.0000,0.0000) node[above right] (text70) {$e_{i,d}$};
\path[cm={{1.80948,0.0,0.0,1.80948,(363.58,229.547)}},fill=black,opacity=0.799]
  (0.0000,0.0000) node[above right] (text74) {$e_{i,1}$};
\path[cm={{1.80948,0.0,0.0,1.80948,(363.23,272.747)}},fill=black,opacity=0.799]
  (0.0000,0.0000) node[above right] (text78) {$e_{i,2}$};
\path[cm={{1.80948,0.0,0.0,1.80948,(371.52,319.974)}},fill=black,opacity=0.799]
  (0.0000,0.0000) node[above right] (text82) {$e_{i,3}$};

\end{tikzpicture}
      \caption{Edges and points in the covering groupoid $E$}
      \label{fig:rules}
    \end{figure}
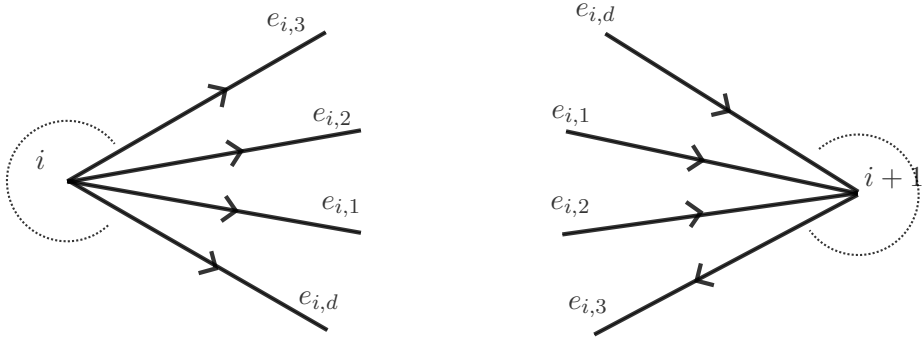

    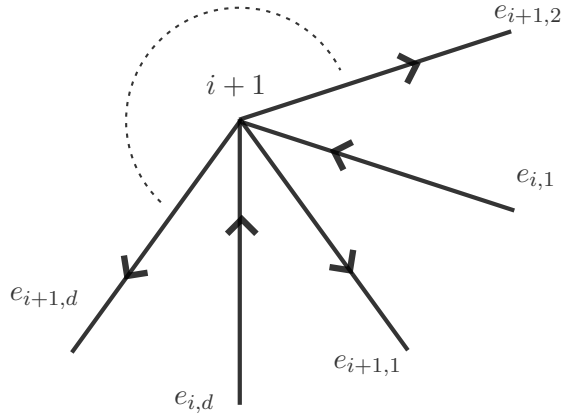
\begin{figure}[ht]
      \centering
      \begin{tikzpicture}[y=0.80pt, x=0.80pt, yscale=-1.000000, xscale=1.000000, inner sep=0pt, outer sep=0pt]
\path[draw=black,opacity=0.799,line width=1.600pt] (275.0950,263.2300) --
  (275.0950,398.4520);
\path[draw=black,opacity=0.799,line width=1.600pt] (275.1810,264.2840) --
  (195.6990,373.6800);
\path[draw=black,opacity=0.799,line width=1.600pt] (274.8180,263.5200) --
  (403.4220,221.7340);
\path[draw=black,opacity=0.799,line width=1.600pt] (276.3050,264.7150) --
  (404.9080,306.5010);
\path[draw=black,opacity=0.799,line width=1.600pt] (275.0950,263.2300) --
  (354.5760,372.6270);
\path[draw=black,opacity=0.799,line width=2.400pt] (231.5230,335.9440) --
  (221.8520,337.4750) -- (220.3200,327.8040);
\path[draw=black,opacity=0.799,line width=2.400pt] (328.7230,325.1290) --
  (327.1910,334.8000) -- (317.5200,333.2680);
\path[draw=black,opacity=0.799,line width=2.400pt] (349.9200,233.0700) --
  (358.6440,237.5160) -- (354.1990,246.2400);
\path[draw=black,opacity=0.799,line width=2.400pt] (269.1130,317.5200) --
  (276.0360,310.5960) -- (282.9600,317.5200);
\path[draw=black,opacity=0.799,line width=2.400pt] (324.1250,287.2800) ..
  controls (323.0140,285.0990) and (319.6800,278.5560) .. (319.6800,278.5560) --
  (328.4040,274.1100);
\path[cm={{1.88861,0.0,0.0,1.88861,(259.2,252.72)}},draw=black,fill=black,opacity=0.799,line
  width=0.087pt] (0.0000,0.0000) node[above right] (text32) {$i+1$};
\path[cm={{1.15999,0.0,0.0,1.15999,(166.32,353.12)}},fill=black,opacity=0.799]
  (0.0000,0.0000) node[above right] (text36) {$e_{i+1,d}$};
\path[cm={{1.15999,0.0,0.0,1.15999,(319.335,385.0)}},fill=black,opacity=0.799]
  (0.0000,0.0000) node[above right] (text40) {$e_{i+1,1}$};
\path[cm={{1.15999,0.0,0.0,1.15999,(244.08,402.28)}},fill=black,opacity=0.799]
  (0.0000,0.0000) node[above right] (text44) {$e_{i,d}$};
\path[cm={{1.15999,0.0,0.0,1.15999,(405.735,296.96)}},fill=black,opacity=0.799]
  (0.0000,0.0000) node[above right] (text48) {$e_{i,1}$};
\path[cm={{1.15999,0.0,0.0,1.15999,(395.28,219.2)}},fill=black,opacity=0.799]
  (0.0000,0.0000) node[above right] (text52) {$e_{i+1,2}$};
\path[draw=black,dash pattern=on 1.60pt off 2.40pt,opacity=0.799,line
  width=0.800pt] (237.7790,302.6030) .. controls (227.6530,292.8710) and
  (221.3510,279.1890) .. (221.3510,264.0360) .. controls (221.3510,234.4970) and
  (245.2970,210.5510) .. (274.8350,210.5510) .. controls (295.7710,210.5510) and
  (313.8970,222.5800) .. (322.6800,240.1030);

\end{tikzpicture}
      \caption{At the point $i+1$, in-edges and out-edges meet {\it alternately}}
      \label{fig:pattern}
    \end{figure}

\section{Braid group representations in automorphism groups of free groups}

  It has been shown (\cite{KimSong2018,Ghaswala2018,Salvetti2018} ) that $d$-fold ($d \geq 3$) branched covering over a disk with $n$ branch points induces an injective homomorphism from braid group $B_n$ to mapping class group $\Gamma_{g,b}$.
  In this section, we introduce an infinite family of representations of braid groups into mapping class groups.
  We generalize the approach given in \cite{KimSong2018}.
  From the $d$-fold branched cover (with $n$ branch points), we get (an infinite family of) representations from $B_n$ to $\aut F_{(d-1)(n-1)}$.
  In Theorem~\ref{thm:loops}, we give an explicit description of this representation that is the action of $\widetilde\beta_i$ on $F_{(d-1)(n-1)}$,
  the free group generated by the elements $\left\{x_{1,1},\cdots , x_{n-1,d-1}\right\}$.
  Furthermore, we give an alternative proof that $\widetilde\beta_i$ equals the product of $d-1$ Dehn twists.

  \subsection{The lifts $\widetilde{\beta}_i$}

    Let $\pi:S_{g,b}^{n}\rightarrow S_{0,1}^{n}$ be a $d$-fold branched cover with $n$ branch points.
    Let $\beta_i : S_{0,1}^{n} \rightarrow S_{0,1}^{n}$ be the half Dehn twist interchanging the two points $i$ and $i+1$ ($i = 1,2, \cdots n-1$).
    Then the lift $\widetilde{\beta_i}:S_{g,b}^{n}\rightarrow S_{g,b}^{n}$ of $\beta_i$ makes the following diagram commute :
    \[ \begin{tikzcd}
    S_{g,b}^{n} \arrow{r}{\widetilde{\beta}_i} \arrow[swap]{d}{\pi} & S_{g,b}^{n} \arrow{d}{\pi} \\%
    S_{0,1}^{n} \arrow{r}{\beta_i}& S_{0,1}^{n}
    \end{tikzcd}
    \]

    The half Dehn twist $\beta_i$ on a disk (Figure~\ref{fig:halfDehn}) may be interpreted as a self-functor of the groupoid as follows :
    $$
    \beta_i:\left\{
    \begin{array}{rcl}
      i & \mapsto & i+1 \\
      i+1 & \mapsto & i \\
      \\
      e_{i-1} & \mapsto & e_{i-1} \cdot e_{i} \\
      e_{i} & \mapsto & e_{i}^{-1} \\
      e_{i+1} & \mapsto & e_{i} \cdot e_{i+1}
    \end{array}\right.
    $$
    for each $i=1,\ldots, n-1$.
    It fixes points and edges that do not appear in the list.

    In \cite{SegalTillmann2008}, Segal and Tillmann observed that the lift of a half Dehn twist with respect to a 2-fold covering gives a full Dehn twist (Figure~\ref{fig:2fold}).
    This induces an embedding of the braid group into the mapping class group.

    \begin{figure}[ht]
      \centering
      \input{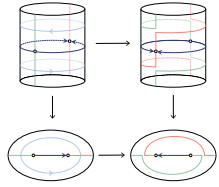}
      \caption{A half Dehn twist is lifted to a full Dehn twist}
      \label{fig:2fold}
    \end{figure}

    In \cite{KimSong2018}, a new nongeometric embedding of the braid group into the mapping class group induced by 3-fold covering was constructed.
    It was observed that the lift of a half Dehn twist is a 1/6-Dehn twist.
    We may extend this construction to the case of $d$-fold covering with $d\geq 4$.
    In this paper, we analyze the braid group representation induced by $d$-fold cover by using a new gadget, covering groupoid.

    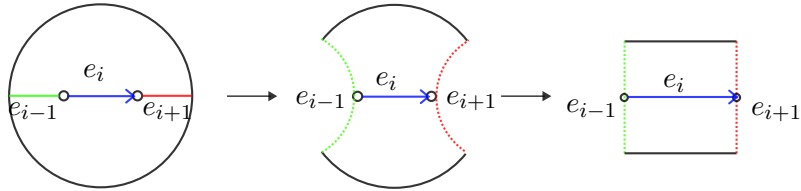
\begin{figure}[ht]
      \centering
      \definecolor{cff0200}{RGB}{255,2,0}
\definecolor{c26ff00}{RGB}{38,255,0}
\definecolor{c3cff00}{RGB}{60,255,0}
\definecolor{c000fff}{RGB}{0,15,255}

\begin{tikzpicture}[y=0.80pt, x=0.80pt, yscale=-1.000000, xscale=1.000000, inner sep=0pt, outer sep=0pt]
\path[draw=black,draw opacity=0.799,line width=0.800pt] (277.0230,228.1720) ..
  controls (269.0880,237.9790) and (256.9540,244.2500) .. (243.3560,244.2500) ..
  controls (229.3110,244.2500) and (216.8280,237.5600) .. (208.9200,227.1930);
\path[draw=black,draw opacity=0.799,line width=0.800pt] (209.2000,174.3730) ..
  controls (217.1210,164.2130) and (229.4750,157.6800) .. (243.3560,157.6800) ..
  controls (257.3630,157.6800) and (269.8170,164.3330) .. (277.7280,174.6510);
\path[draw=black,draw opacity=0.799,line width=0.800pt] (258.6190,200.9650) ..
  controls (258.6190,199.7610) and (259.5950,198.7840) .. (260.7990,198.7840) ..
  controls (262.0030,198.7840) and (262.9790,199.7610) .. (262.9790,200.9650) ..
  controls (262.9790,202.1690) and (262.0030,203.1450) .. (260.7990,203.1450) ..
  controls (259.5950,203.1450) and (258.6190,202.1690) .. (258.6190,200.9650) --
  cycle;
\path[draw=black,draw opacity=0.799,line width=0.800pt] (223.7330,200.9650) ..
  controls (223.7330,199.7610) and (224.7090,198.7840) .. (225.9140,198.7840) ..
  controls (227.1180,198.7840) and (228.0940,199.7610) .. (228.0940,200.9650) ..
  controls (228.0940,202.1690) and (227.1180,203.1450) .. (225.9140,203.1450) ..
  controls (224.7090,203.1450) and (223.7330,202.1690) .. (223.7330,200.9650) --
  cycle;
\path[draw=black,draw opacity=0.799,line width=0.800pt] (61.1387,200.7210) ..
  controls (61.1387,176.8150) and (80.5180,157.4360) .. (104.4240,157.4360) ..
  controls (128.3290,157.4360) and (147.7090,176.8150) .. (147.7090,200.7210) ..
  controls (147.7090,224.6260) and (128.3290,244.0060) .. (104.4240,244.0060) ..
  controls (80.5180,244.0060) and (61.1387,224.6260) .. (61.1387,200.7210) --
  cycle;
\path[draw=black,draw opacity=0.799,line width=0.800pt] (119.6860,200.7210) ..
  controls (119.6860,199.5170) and (120.6620,198.5400) .. (121.8660,198.5400) ..
  controls (123.0710,198.5400) and (124.0470,199.5170) .. (124.0470,200.7210) ..
  controls (124.0470,201.9250) and (123.0710,202.9010) .. (121.8660,202.9010) ..
  controls (120.6620,202.9010) and (119.6860,201.9250) .. (119.6860,200.7210) --
  cycle;
\path[draw=black,draw opacity=0.799,line width=0.800pt] (84.8007,200.7210) ..
  controls (84.8007,199.5170) and (85.7769,198.5400) .. (86.9811,198.5400) ..
  controls (88.1852,198.5400) and (89.1614,199.5170) .. (89.1614,200.7210) ..
  controls (89.1614,201.9250) and (88.1852,202.9010) .. (86.9811,202.9010) ..
  controls (85.7769,202.9010) and (84.8007,201.9250) .. (84.8007,200.7210) --
  cycle;
\path[draw=cff0200,draw opacity=0.799,line width=0.800pt] (124.0470,200.7210) --
  (147.7090,200.7210);
\path[draw=c26ff00,draw opacity=0.799,line width=0.800pt] (61.1387,200.7210) --
  (84.8007,200.7210);
\path[draw=cff0200,dash pattern=on 0.80pt,draw opacity=0.799,line width=0.800pt]
  (276.5460,226.8860) .. controls (268.4530,221.2760) and (263.1520,211.9200) ..
  (263.1520,201.3260) .. controls (263.1520,189.8750) and (269.3440,179.8710) ..
  (278.5630,174.4790);
\path[draw=c3cff00,dash pattern=on 0.80pt,draw opacity=0.799,line width=0.800pt]
  (208.7530,173.6990) .. controls (218.0640,179.0670) and (224.3320,189.1230) ..
  (224.3320,200.6430) .. controls (224.3320,212.2420) and (217.9780,222.3570) ..
  (208.5610,227.6970);
\path[draw=black,draw opacity=0.799,line width=0.800pt] (352.1380,174.9600) --
  (405.0970,174.9600);
\path[draw=c3cff00,dash pattern=on 0.80pt,draw opacity=0.799,line width=0.800pt]
  (352.1380,227.9190) -- (352.1380,174.9600);
\path[draw=black,draw opacity=0.799,line width=0.800pt] (405.0970,227.9190) --
  (352.1380,227.9190);
\path[draw=cff0200,dash pattern=on 0.80pt,draw opacity=0.799,line width=0.800pt]
  (405.0970,174.9600) -- (405.0970,227.9190);
\path[draw=black,draw opacity=0.799,line width=0.800pt] (403.4370,201.5170) ..
  controls (403.4370,200.6010) and (404.1810,199.8570) .. (405.0970,199.8570) ..
  controls (406.0140,199.8570) and (406.7570,200.6010) .. (406.7570,201.5170) ..
  controls (406.7570,202.4340) and (406.0140,203.1770) .. (405.0970,203.1770) ..
  controls (404.1810,203.1770) and (403.4370,202.4340) .. (403.4370,201.5170) --
  cycle;
\path[draw=black,draw opacity=0.799,line width=0.800pt] (350.3220,201.5170) ..
  controls (350.3220,200.6010) and (351.0650,199.8570) .. (351.9820,199.8570) ..
  controls (352.8990,199.8570) and (353.6420,200.6010) .. (353.6420,201.5170) ..
  controls (353.6420,202.4340) and (352.8990,203.1770) .. (351.9820,203.1770) ..
  controls (351.0650,203.1770) and (350.3220,202.4340) .. (350.3220,201.5170) --
  cycle;
\begin{scope}[opacity=0.799,transparency group]
  \path[draw=c000fff,opacity=0.799,line width=0.800pt] (119.6700,200.8800) ..
    controls (117.0700,200.8800) and (88.5600,200.8800) .. (88.5600,200.8800);
  \path[fill=c000fff,opacity=0.799] (116.8200,204.2670) -- (120.4870,201.2670) --
    (120.9600,200.8800) -- (120.4870,200.4930) -- (116.8200,197.4930) .. controls
    (116.6060,197.3180) and (116.2910,197.3500) .. (116.1170,197.5630) .. controls
    (115.9420,197.7770) and (115.9730,198.0920) .. (116.1870,198.2670) --
    (119.8540,201.2670) -- (119.8540,200.4930) -- (116.1870,203.4930) .. controls
    (115.9730,203.6680) and (115.9420,203.9830) .. (116.1170,204.1970) .. controls
    (116.2910,204.4100) and (116.6060,204.4420) .. (116.8200,204.2670) -- cycle;
\end{scope}
\begin{scope}[opacity=0.799,transparency group]
  \path[draw=c000fff,opacity=0.799,line width=0.800pt] (258.9900,201.1240) ..
    controls (256.3900,201.1240) and (227.8800,201.1240) .. (227.8800,201.1240);
  \path[fill=c000fff,opacity=0.799] (256.1400,204.5110) -- (259.8070,201.5110) --
    (260.2800,201.1240) -- (259.8070,200.7370) -- (256.1400,197.7370) .. controls
    (255.9260,197.5620) and (255.6110,197.5940) .. (255.4370,197.8070) .. controls
    (255.2620,198.0210) and (255.2930,198.3360) .. (255.5070,198.5110) --
    (259.1740,201.5110) -- (259.1740,200.7370) -- (255.5070,203.7370) .. controls
    (255.2930,203.9120) and (255.2620,204.2270) .. (255.4370,204.4410) .. controls
    (255.6110,204.6540) and (255.9270,204.6860) .. (256.1400,204.5110) -- cycle;
\end{scope}
\begin{scope}[opacity=0.799,transparency group]
  \path[draw=c000fff,opacity=0.799,line width=0.800pt] (404.7900,201.2770) ..
    controls (401.1760,201.2770) and (353.6050,201.2770) .. (353.6050,201.2770);
  \path[fill=c000fff,opacity=0.799] (401.9390,204.6640) -- (405.6060,201.6640) --
    (406.0790,201.2770) -- (405.6060,200.8900) -- (401.9390,197.8900) .. controls
    (401.7260,197.7150) and (401.4110,197.7470) .. (401.2360,197.9600) .. controls
    (401.0610,198.1740) and (401.0930,198.4890) .. (401.3060,198.6640) --
    (404.9730,201.6640) -- (404.9730,200.8900) -- (401.3060,203.8900) .. controls
    (401.0930,204.0650) and (401.0610,204.3800) .. (401.2360,204.5940) .. controls
    (401.4110,204.8070) and (401.7260,204.8390) .. (401.9390,204.6640) -- cycle;
\end{scope}
\path[cm={{1.0,0.0,0.0,1.0,(60.48,212.8)}},fill=black] (0.0000,0.0000)
  node[above right] (text72) {$e_{i-1}$};
\path[cm={{1.0,0.0,0.0,1.0,(96.12,193.88)}},fill=black] (0.0000,0.0000)
  node[above right] (text76) {$e_{i}$};
\path[cm={{1.0,0.0,0.0,1.0,(124.2,212.8)}},fill=black] (0.0000,0.0000)
  node[above right] (text80) {$e_{i+1}$};
\path[cm={{1.0,0.0,0.0,1.0,(196.56,207.604)}},fill=black] (0.0000,0.0000)
  node[above right] (text84) {$e_{i-1}$};
\path[cm={{1.0,0.0,0.0,1.0,(234.36,197.68)}},fill=black] (0.0000,0.0000)
  node[above right] (text88) {$e_{i}$};
\path[cm={{1.0,0.0,0.0,1.0,(267.84,207.604)}},fill=black] (0.0000,0.0000)
  node[above right] (text92) {$e_{i+1}$};
\path[cm={{1.0,0.0,0.0,1.0,(324.54,211.68)}},fill=black] (0.0000,0.0000)
  node[above right] (text96) {$e_{i-1}$};
\path[cm={{1.0,0.0,0.0,1.0,(370.44,198.72)}},fill=black] (0.0000,0.0000)
  node[above right] (text100) {$e_{i}$};
\path[cm={{1.0,0.0,0.0,1.0,(412.02,212.8)}},fill=black] (0.0000,0.0000)
  node[above right] (text104) {$e_{i+1}$};
\begin{scope}[opacity=0.799,transparency group]
  \path (293.7600,200.8800) -- (317.5200,200.8800);
    \path[draw=black,line width=0.560pt] (314.3680,200.8800) .. controls
      (312.0940,200.8800) and (293.7600,200.8800) .. (293.7600,200.8800);
    \path[fill=black] (313.3180,203.3300) -- (317.5180,200.8800) --
      (313.3180,198.4300) -- (313.3180,203.3300) -- cycle;
\end{scope}
\begin{scope}[opacity=0.799,transparency group]
  \path (164.1600,200.8800) -- (187.9200,200.8800);
    \path[draw=black,line width=0.560pt] (184.7680,200.8800) .. controls
      (182.4940,200.8800) and (164.1600,200.8800) .. (164.1600,200.8800);
    \path[fill=black] (183.7180,203.3300) -- (187.9180,200.8800) --
      (183.7180,198.4300) -- (183.7180,203.3300) -- cycle;
\end{scope}

\end{tikzpicture}
      \caption{We cut the disk along edges $e_{i-1}$ and $e_{i+1}$}
      \label{fig:cut}
    \end{figure}

    Let $S_i$ be the $d$-fold covering space over a small disk $D_i$ with two branch points $i$ and $i+1$.
    $S_i$ is called  {\it atomic surface}.
    Then $S_{g,b}^{n}$, the $d$-fold covering space of a disk with $n$ branched points, is the union of these atomic surfaces.
    Notice that each $S_i$ is homeomorphic to $S_{r,s}^{2}$, where $r = \frac{d-gcd(d,n)}{2}$ and $s = gcd(d, 2)$.

    An atomic surface $S_i$ is generated by gluing $d$ copies of $D_i$ as follows.
    We first cut out a small disk $D_i$ $(1 \leq i \leq n-1$) that contains two consecutive branch points, say $i$ and ${i+1}$.
    We cut along the edges $e_{i-1}$ and $e_{i+1}$ (Figure~\ref{fig:cut}).
    Now make $d$ copies of $D_i$.
    We glue them along $e_{i-1}$ and $e_{i+1}$ as in Figure~\ref{fig:gluing}, then we get an atomic surface $S_i$.

    \begin{figure}[ht]
      \centering
      \input{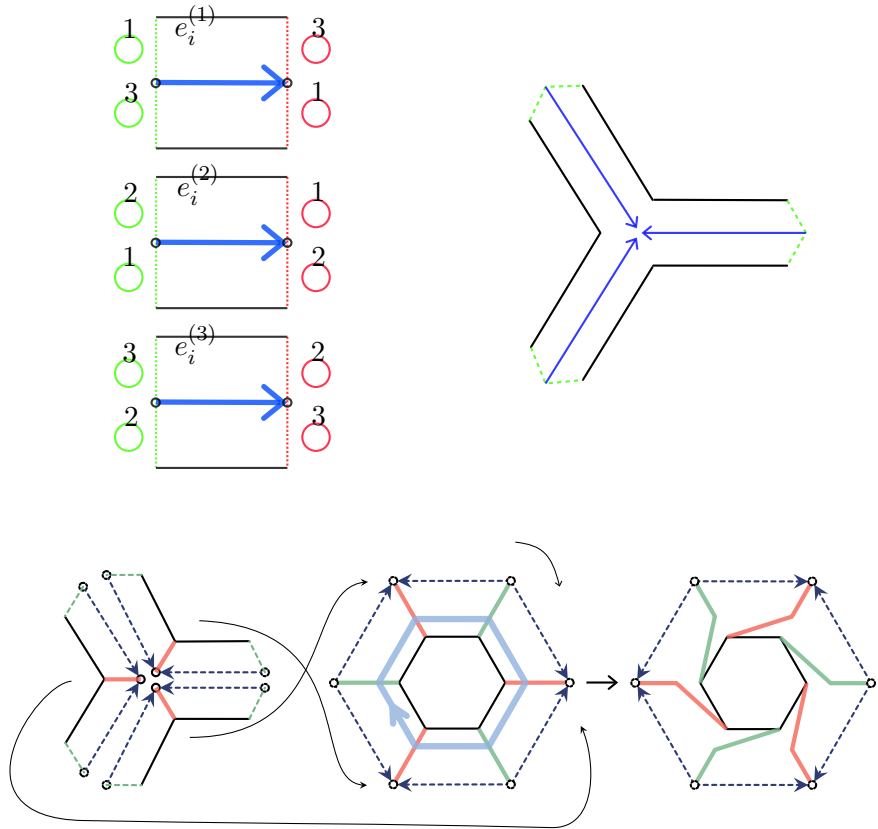}
      \caption{Gluing $d$ copies of disks to generate an atomic surface}
      \label{fig:gluing}
    \end{figure}

    The covering space $S_{g,b}^{n}$ is the union of $(n-1)$ copies of atomic surfaces $S_{r,s}^{2}$, where two neighboring $S_{r,s}^{2}$ share one branch point.
    We glue two neighboring atomic surfaces as follows.
    At the point ${i+1}$, there are $d$ incoming edges and $d$ outgoing edges.
    We glue two atomic surfaces so that the edges form the order $\left(e_{i+1,1}, e_{i,1},e_{i+1,2}, e_{i,2},\cdots e_{i+1,d}, e_{i,d}\right)$ in a counter-clockwise direction (Figure~\ref{fig:pattern}).

    \begin{figure}[ht]
      \centering
      \input{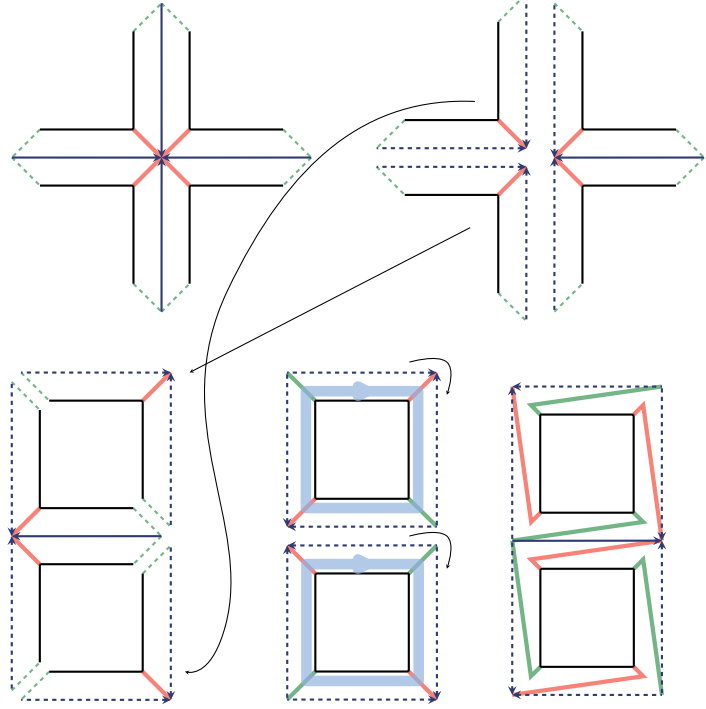}
      \caption{cut-and-paste}
      \label{fig:cut_paste}
    \end{figure}

    In the case of 4-fold cover (with $n$ branch points),
    by gluing four copies of small disks if we first get a cross shape of atomic surface as in Figure~\ref{fig:cut_paste}.
    The cross is again cut and pasted to form two squares as shown.
    The inverse image $\pi^{-1}$ of a small neighborhood around the edge $e_i$,
    is a small tubular neighborhood around the two loops (highlighted in bold).
    Now, it is easily seen that a half Dehn twist is lifted to two 1/4-Dehn twists.

    We shall look at the atomic surface $S_{r,s}^{2}$, and see how $\beta_i$ is lifted there.
    By cut-and-paste and a kind of $\frac{1}{d}$-twist (two $\frac{1}{d}$-twists for even $d$, and $\frac{1}{2d}$-twist for odd $d$), we obtain the following important result :

    \begin{theorem}
      \label{thm:lift}
      A half-Dehn twist $\beta_i$ is lifted to the covering groupoid:
      $$
      \widetilde\beta_i:\left\{
      \begin{array}{rcl}
        i & \mapsto & i+1 \\
        i+1 & \mapsto & i \\
        \\
        e_{i-1, j} & \mapsto & e_{i-1, j} \cdot e_{i, j+1} \\
        e_{i,j} & \mapsto & e_{i, j+1}^{-1} \\
        e_{i+1, j} & \mapsto & e_{i,j} \cdot e_{i+1, j}
      \end{array}\right.
      $$
    \end{theorem}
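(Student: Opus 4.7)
My plan is to reduce the assertion to an analysis on each atomic surface $S_i$ and then compute the lift via the cut-and-paste description already developed in this section. Since $\beta_i$ is supported on a small disk $D_i$ containing only the two branch points $i$ and $i+1$, its lift $\widetilde{\beta}_i$ is supported on $S_i=\pi^{-1}(D_i)$. Consequently $\widetilde{\beta}_i$ acts as the identity on every edge of the covering groupoid except those in the three families $\{e_{i-1,j}\}$, $\{e_{i,j}\}$, $\{e_{i+1,j}\}$, and it fixes all vertices outside $\{i,i+1\}$.

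First I would establish existence and uniqueness of a lift that fixes $\partial S_i$ pointwise. The local monodromies of $\pi$ around the branch points $i$ and $i+1$ are the same generator of the deck group $\mathbb{Z}/d$, and the half-Dehn twist merely swaps the two punctures in $D_i$. Thus the induced map on $\pi_1(D_i\setminus\{i,i+1\})$ preserves the kernel defining the cover, so a lift to $S_i$ exists. Among the $d$ lifts differing by deck transformations, exactly one restricts to the identity on $\partial S_i$, and this one is $\widetilde{\beta}_i$.

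Second I would read off the effect of $\widetilde{\beta}_i$ on the edges from the explicit model. Build $S_i$ by taking $d$ copies of $D_i$ cut open along $e_{i-1}\cup e_{i+1}$, glued cyclically according to the counter-clockwise alternating pattern at the branch points (Figure~\ref{fig:pattern}, Figure~\ref{fig:gluing}). In this model the base map $\beta_i$ is the $180^{\circ}$ rotation around the midpoint of $[i,i+1]$, whose natural lift rotates each sheet-disk by $180^{\circ}$ while advancing the cyclic sheet index by one; the amount of shift equals the monodromy $\sigma\cdot\sigma$ picked up by a half-loop encircling both branch points, computed modulo $d$ against the sign of rotation. Lifting the groupoid formula $\beta_i(e_i)=e_i^{-1}$ sheet-by-sheet then gives $\widetilde{\beta}_i(e_{i,j})=e_{i,j+1}^{-1}$ directly.

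For the other two families I would lift the base formulas $\beta_i(e_{i-1})=e_{i-1}\cdot e_i$ and $\beta_i(e_{i+1})=e_i\cdot e_{i+1}$, then normalize the lift so that its outer endpoint agrees with the boundary-fixing condition. At the vertex $i$, the edge $e_{i-1,j}$ keeps sheet $j$ on its outer end; its inner endpoint is transported by $\widetilde{\beta}_i$ to the vertex $i+1$, and by the alternation rule the unique in-edge at $i$ that closes the lifted path on sheet $j+1$ is $e_{i,j+1}$, producing $e_{i-1,j}\cdot e_{i,j+1}$. At the vertex $i+1$ the rôles of in- and out-edges under the alternation are exchanged, so the analogous computation for $e_{i+1,j}$ yields the leading factor $e_{i,j}$ with \emph{no} shift in the second subscript, giving $e_{i,j}\cdot e_{i+1,j}$. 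The main obstacle I anticipate is precisely this bookkeeping: justifying rigorously why the sheet index shifts by $+1$ at the vertex $i$ but by $0$ at the vertex $i+1$. I would address it by labelling the wedges at each branch point in counter-clockwise order according to Figure~\ref{fig:pattern} and tracking where the $180^{\circ}$ rotation sends a chosen initial wedge in each case.
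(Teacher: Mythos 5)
Your proposal is correct and follows essentially the same route as the paper, which justifies Theorem~\ref{thm:lift} only by the cut-and-paste construction of the atomic surface and the observation that $\beta_i$ lifts to a fractional twist shifting the sheets cyclically. In fact the paper states the theorem without a formal proof, so your plan (reduction to the atomic surface, existence and uniqueness of the boundary-fixing lift, and the counter-clockwise wedge bookkeeping explaining the $+1$ shift at the vertex $i$ versus the $0$ shift at $i+1$) supplies strictly more detail than the paper's own argument.
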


    Now we show that the homomorphism $\phi : B_n \rightarrow \Gamma_{g,b}, \beta_i \mapsto \widetilde{\beta_i}$ is well-defined, that is, the lifts $\widetilde{\beta_i}$ satisfy the braid relation.

    \begin{theorem}
      The lifts $\widetilde{\beta_i}$, as self-functors of the covering groupoid $E$, satisfy the braid relations
      $\widetilde{\beta_{i}} \widetilde{\beta}_{i+1} \widetilde{\beta_{i}} = \widetilde{\beta}_{i+1} \widetilde{\beta_{i}} \widetilde{\beta}_{i+1}$.
    \end{theorem}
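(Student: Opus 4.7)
The plan is to verify the braid relation directly by computation on the covering groupoid. Since each $\widetilde{\beta}_i$ is a self-functor of $E$, it is determined by its action on the vertices together with the generating edges $e_{k,j}$, so it suffices to check that $\widetilde{\beta_i}\widetilde{\beta}_{i+1}\widetilde{\beta_i}$ and $\widetilde{\beta}_{i+1}\widetilde{\beta_i}\widetilde{\beta}_{i+1}$ agree on this finite set of generators. By inspection of the formulas in Theorem~\ref{thm:lift}, both $\widetilde{\beta_i}$ and $\widetilde{\beta}_{i+1}$ fix every vertex outside $\{i,i+1,i+2\}$ and every generating edge $e_{k,j}$ whose horizontal index $k$ lies outside $\{i-1,i,i+1,i+2\}$, so only these finitely many generators need to be checked.

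On vertices, a direct computation shows that both three-fold compositions swap $i$ with $i+2$ and fix $i+1$, so they agree. The remaining work is to verify the four edge families $e_{i-1,j}$, $e_{i,j}$, $e_{i+1,j}$, $e_{i+2,j}$ (for each $j$, read modulo $d$). I would proceed by expanding each composition step-by-step, using functoriality $\widetilde{\beta}(f\cdot g)=\widetilde{\beta}(f)\cdot\widetilde{\beta}(g)$ and $\widetilde{\beta}(f^{-1})=\widetilde{\beta}(f)^{-1}$ together with the formulas of Theorem~\ref{thm:lift}.

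For $e_{i-1,j}$ and $e_{i+2,j}$ only one of the two functors acts nontrivially in the first step, and the resulting two-edge concatenation is transported by the remaining applications with only mild interaction; both sides collapse to the three-edge paths $e_{i-1,j}\cdot e_{i,j+1}\cdot e_{i+1,j+2}$ and $e_{i,j}\cdot e_{i+1,j}\cdot e_{i+2,j}$ respectively. For $e_{i,j}$ and $e_{i+1,j}$ both middle functors act nontrivially and the computation is longer; the key point is that successive applications produce cancelling pairs such as $e_{i,j+1}^{-1}\cdot e_{i,j+1}$ and $e_{i+1,j+1}\cdot e_{i+1,j+1}^{-1}$, after which both sides reduce to the single edges $e_{i+1,j+2}^{-1}$ and $e_{i,j+1}^{-1}$ respectively.

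The main obstacle is not conceptual but bookkeeping: one must carefully track how the cyclic index $j$ shifts under each of the three applications, keep the order of composition consistent, and correctly propagate inverses when $\widetilde{\beta}_i$ is applied to an already-expanded concatenation. Once the four edge families are tabulated and the cancellations are executed in each, the matching of both sides is mechanical, and this establishes the braid relation for $\widetilde{\beta_i}$ as self-functors of $E$. The commutation relations $\widetilde{\beta_i}\widetilde{\beta_j}=\widetilde{\beta_j}\widetilde{\beta_i}$ for $|i-j|\geq 2$ are immediate since then the supports of the two functors are disjoint.
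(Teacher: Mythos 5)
Your proposal is correct and follows essentially the same route as the paper: reduce to checking the two compositions on the generating edges $e_{k,j}$ for $k\in\{i-1,i,i+1,i+2\}$, expand via functoriality using the formulas of Theorem~\ref{thm:lift}, and observe the cancellations; your predicted outputs ($e_{i-1,j}\cdot e_{i,j+1}\cdot e_{i+1,j+2}$, $e_{i+1,j+2}^{-1}$, $e_{i,j+1}^{-1}$, and $e_{i,j}\cdot e_{i+1,j}\cdot e_{i+2,j}$) match the paper's computation exactly. The only difference is that the paper writes out the bookkeeping in full while you describe it, but the argument is the same.
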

    \begin{proof}
      It suffices to show
      $$\widetilde{\beta_{i}} \widetilde{\beta}_{i+1} \widetilde{\beta_{i}} (e_{k,j}) = \widetilde{\beta}_{i+1} \widetilde{\beta_{i}} \widetilde{\beta}_{i+1} (e_{k,j})$$
      for $k \in \{i-1, i, i+1, i+2\}$.
      \begin{multline*}
        e_{i-1, j} \xmapsto{\widetilde{\beta_i}} e_{i-1, j} \cdot e_{i, j+1}
        \xmapsto{\widetilde{\beta}_{i+1}} e_{i-1, j} \cdot e_{i, j+1} \cdot e_{i+1, j+2} \\
        \xmapsto{\widetilde{\beta_{i}}}  e_{i-1, j} \cdot e_{i, j+1} \cdot e_{i, j+2}^{-1} \cdot e_{i, j+2} \cdot e_{i+1, j+2} = e_{i-1, j} \cdot e_{i, j+1} \cdot e_{i+1, j+2}
      \end{multline*}

      \begin{equation*}
        e_{i, j} \xmapsto{\widetilde{\beta_i}} e_{i, j+1}^{-1}
        \xmapsto{\widetilde{\beta}_{i+1}} \left(e_{i, j+1} \cdot e_{i+1, j+2}\right)^{-1}
        \xmapsto{\widetilde{\beta_i}} \left(e_{i, j+2}^{-1} \cdot e_{i, j+2} \cdot e_{i+1, j+2}\right)^{-1}
        = e_{i+1, j+2}^{-1}
      \end{equation*}
      \begin{equation*}
        e_{i+1, j} \xmapsto{\widetilde{\beta_i}} e_{i,j} \cdot e_{i+1, j}
        \xmapsto{\widetilde{\beta}_{i+1}} e_{i,j} \cdot e_{i+1, j+1} \cdot e_{i+1, j+1}^{-1}
        \xmapsto{\widetilde{\beta_i}} e_{i, j+1}^{-1}
      \end{equation*}
      \begin{equation*}
        e_{i+2, j} \xmapsto{\widetilde{\beta_i}} e_{i+2, j}
        \xmapsto{\widetilde{\beta}_{i+1}} e_{i+1, j} \cdot e_{i+2, j}
        \xmapsto{\widetilde{\beta_i}} e_{i,j} \cdot e_{i+1, j} \cdot e_{i+2, j}
      \end{equation*}

      Whereas,
      \begin{equation*}
        e_{i-1, j} \xmapsto{\widetilde{\beta}_{i+1}}
        e_{i-1, j} \xmapsto{\widetilde{\beta_i}} e_{i-1, j} \cdot e_{i, j+1}
        \xmapsto{\widetilde{\beta}_{i+1}}
        e_{i-1, j} \cdot e_{i, j+1} \cdot e_{i+1, j+2}
      \end{equation*}
      \begin{equation*}
        e_{i, j} \xmapsto{\widetilde{\beta}_{i+1}}
        e_{i, j} \cdot e_{i+1, j+1} \xmapsto{\widetilde{\beta_i}}
        e_{i, j+1}^{-1}  \cdot e_{i, j+1} \cdot e_{i+1, j+1}
        \xmapsto{\widetilde{\beta}_{i+1}} e_{i+1, j+2}^{-1}
      \end{equation*}
      \begin{multline*}
        e_{i+1, j} \xmapsto{\widetilde{\beta}_{i+1}}
        e_{i+1, j+1}^{-1}
        \xmapsto{\widetilde{\beta_i}}
        \left(e_{i, j+1} \cdot e_{i+1, j+1}\right)^{-1}\\
        \xmapsto{\widetilde{\beta}_{i+1}}
        \left(e_{i, j+1} \cdot e_{i+1, j+1} \cdot e_{i+1, j+1}^{-1}\right)^{-1}
        = e_{i, j+1}^{-1}
      \end{multline*}
      \begin{multline*}
        e_{i+2, j}\xmapsto{\widetilde{\beta}_{i+1}}
        e_{i+1, j} \cdot e_{i+2, j} \xmapsto{\widetilde{\beta_i}}
        e_{i,j} \cdot e_{i+1, j} \cdot e_{i+2, j}\\
        \xmapsto{\widetilde{\beta}_{i+1}}
        e_{i,j} \cdot e_{i+1, j+1} \cdot \left(e_{i+1, j+1}\right)^{-1} \cdot e_{i+1, j} \cdot e_{i+2, j}
        = e_{i,j} \cdot e_{i+1, j} \cdot e_{i+2, j}
      \end{multline*}
      Thus we have $\widetilde{\beta_{i}} \widetilde{\beta}_{i+1} \widetilde{\beta_{i}} = \widetilde{\beta}_{i+1} \widetilde{\beta_{i}} \widetilde{\beta}_{i+1}$.
    \end{proof}

  \subsection{Representations of braid groups}
    In the previous section we have constructed a homomorphism $\phi_d : B_n \rightarrow \Gamma_{g,b}, \beta_i \mapsto \widetilde{\beta_i}$ induced by $d$-fold covering over a disk.
    The injectivity of this homomorphism comes from the Birman-Hilden theory (\cite{Birman1973,Birman2017,Margalit2017,Ghaswala2018}).
    On the other hand, mapping class group $\Gamma_{g,b}$($b \geq 1$) may be regarded as a subgroup of the automorphism group of the fundamental group of $S_{g,b}$ which is the free group on $2g+b-1$ generators.
    That is, the map $\phi_d$ induces a faithful representation of braid group into $\aut(F_{2g+b-1})$. In section 2.1 we have seen that $g=\frac{dn-n-d-\gcd(d,n)}{2}+1$ and $b=\gcd(d,n)$. This means that the rank of the fundamental group of the surface equals $(d-1)(n-1)$.

    Now we are going to describe this faithful representation $\phi_d : B_n \rightarrow \aut F_{(d-1)(n-1)}$ in an explicit form.
    We first determine an appropriate set of loops on the surface which are generators of the free group.
    Let $p_i$ be the path in covering groupoid $E$ defined as
    $$
    p_i := e_{0, 1} \cdot e_{1,1}\cdot \cdots \cdot e_{i-1,1}.
    $$
    The generating loops of the surface are defined to be:
    $$
    x_{i,j} := p_i \cdot \left( e_{i,j} \cdot e_{i,j+1}^{-1} \right) \cdot p_i^{-1}
    $$
    for $j = 1,2,\cdots, d-1$ and let (Figure~\ref{fig:loopxij})
    $$x_{i,d}=p_i\cdot e_{i, d}\cdot e_{i, 1}^{-1}\cdot p_i^{-1}=\left(x_{i,1}\cdot x_{i, 2}\cdot\cdots\cdot x_{i, d-1}\right)^{-1}.$$
    We read the second index $j$ modulo $d$.

    \begin{figure}[ht]
      \centering
      \input{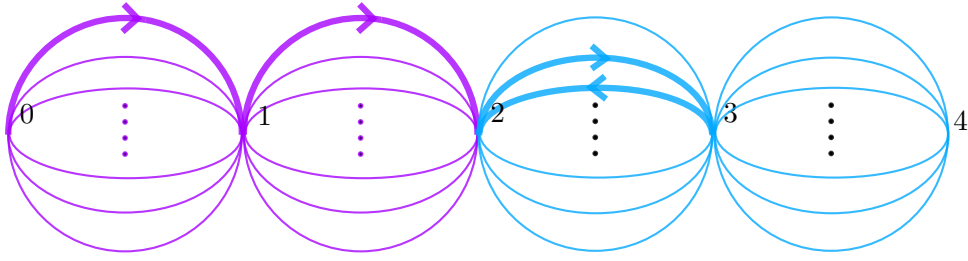}
      \caption{Loop $x_{2,2}$ is shown in bold, with three branch points}
      \label{fig:3foldloops}
    \end{figure}

    Check that the number of these loops $x_{i,j}$ is $(d-1)(n-1)$.
    Note that all these loops are nontrivial.
    For if $x_{i,j}$ is trivial then edges $e_{i,j}$ and $e_{i, j+1}$ are path homotopic in the atomic surface $S_i$,
    thus by symmetry of the atomic surface all paths $e_{i,j}$ ($j=1,\ldots,d$) are homotopic to each other,
    which is not possible.
    All these loops are the generators of the fundamental group of the surface which is a tubular neighborhood of the graph represented by the groupoid.

    \begin{figure}[ht]
      \centering
      \definecolor{c00a8ff}{RGB}{0,168,255}
\definecolor{ca700ff}{RGB}{167,0,255}

\begin{tikzpicture}[y=0.80pt, x=0.80pt, yscale=-1.000000, xscale=1.000000, inner sep=0pt, outer sep=0pt]
\path[draw=c00a8ff,draw opacity=0.799,line width=2.400pt] (498.9600,177.7820) ..
  controls (498.9600,157.4480) and (474.2350,140.9650) .. (443.7340,140.9650) ..
  controls (413.2340,140.9650) and (388.5080,157.4480) .. (388.5080,177.7820);
\path[draw=c00a8ff,draw opacity=0.799,line width=2.400pt] (439.4140,136.6940) --
  (446.2890,142.6520) -- (439.4140,147.4940);
\path[draw=c00a8ff,draw opacity=0.799,line width=2.400pt] (447.8340,161.9830) --
  (440.6400,156.4140) -- (447.2370,151.2000);
\path[draw=ca700ff,draw opacity=0.799,line width=2.400pt] (342.4240,122.2920) ..
  controls (339.2480,121.7240) and (335.9790,121.4280) .. (332.6400,121.4280) ..
  controls (302.1400,121.4280) and (277.4140,146.1530) .. (277.4140,176.6540);
\path[draw=ca700ff,draw opacity=0.799,line width=2.400pt] (387.8660,176.6540) ..
  controls (387.8660,149.4920) and (368.2580,126.9100) .. (342.4240,122.2920);
\path[draw=ca700ff,draw opacity=0.799,line width=2.400pt] (332.0760,114.9460) --
  (338.5440,121.6320) -- (331.4140,127.9090);
\path[draw=ca700ff,draw opacity=0.799,line width=2.400pt] (157.8900,121.8260) ..
  controls (154.7140,121.2580) and (151.4450,120.9620) .. (148.1060,120.9620) ..
  controls (117.6060,120.9620) and (92.8800,145.6870) .. (92.8800,176.1880);
\path[draw=ca700ff,draw opacity=0.799,line width=2.400pt] (203.3320,176.1880) ..
  controls (203.3320,149.0260) and (183.7240,126.4440) .. (157.8900,121.8260);
\path[draw=ca700ff,draw opacity=0.799,line width=2.400pt] (147.5420,114.4800) --
  (154.0100,121.1660) -- (146.8800,127.4430);
\path[draw=black,dash pattern=on 0.00pt off 3.20pt,line cap=round,draw
  opacity=0.799,line width=1.200pt] (217.0800,174.9600) -- (266.7600,174.9600);
\path[draw=c00a8ff,draw opacity=0.799,line width=2.400pt] (388.8000,177.1200) ..
  controls (388.8000,165.1910) and (413.4600,155.5200) .. (443.8800,155.5200) ..
  controls (474.3000,155.5200) and (498.9600,165.1910) .. (498.9600,177.1200);
\path[xscale=1.084,yscale=0.922,fill=black,line width=0.986pt]
  (83.2644,197.5464) node[above right] (text32) {$0$};
\path[cm={{1.192,0.0,0.0,1.192,(195.489,181.44)}},fill=black] (0.0000,0.0000)
  node[above right] (text36) {$1$};
\path[cm={{1.192,0.0,0.0,1.192,(270.744,180.863)}},fill=black] (0.0000,0.0000)
  node[above right] (text40) {$i-1$};
\path[cm={{1.192,0.0,0.0,1.192,(388.8,181.736)}},fill=black] (0.0000,0.0000)
  node[above right] (text44) {$i$};
\path[cm={{1.192,0.0,0.0,1.192,(494.64,181.44)}},fill=black] (0.0000,0.0000)
  node[above right] (text48) {$i+1$};
\path[cm={{1.0,0.0,0.0,1.0,(430.92,116.64)}},fill=black] (0.0000,0.0000)
  node[above right] (text52) {$e_{i,j}$};
\path[cm={{1.0,0.0,0.0,1.0,(416.88,173.92)}},fill=black] (0.0000,0.0000)
  node[above right] (text56) {$e_{i,j+1}^{-1}$};

\end{tikzpicture}
      \caption{The loop $x_{i,j}$}
      \label{fig:loopxij}
    \end{figure}
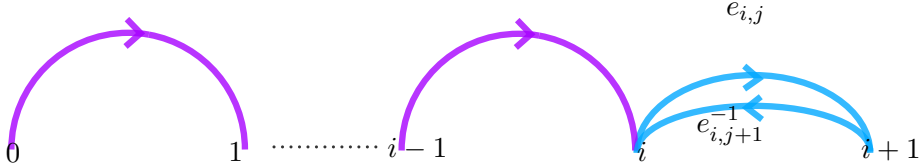

    \begin{theorem}
      \label{thm:loops}
      Let $\phi_d : B_n \rightarrow \aut F_{(d-1)(n-1)}$, $\beta_i \mapsto \widetilde{\beta_i}$ be the (faithful) representation induced by the $d$-fold covering over a disk with $n$ branch points. Then $\widetilde{\beta_{i}}$ acts on the $(d-1)(n-1)$ generators  $\{ x_{1,1}, \ldots, x_{n-1,d-1} \}$ of the free group as follows:

      $$
      \widetilde\beta_i:\left\{
      \begin{array}{rcl}
          x_{i-1, j} & \mapsto & x_{i-1, j-1}^{-1} \cdot\cdots\cdot x_{i-1, 1}^{-1} \cdot x_{i, j+1} \cdot x_{i-1, 1} \cdot\cdots\cdot x_{i-1, j}\\
          \\
          x_{i, j} & \mapsto & x_{i, 2} \cdot\cdots\cdot x_{i, j} \cdot x_{i, j+1}^{-1} \cdot\cdots\cdot x_{i, 2}^{-1}\\
          \\
          x_{i+1, j} & \mapsto & x_{i, j-1}^{-1} \cdot\cdots\cdot x_{i, 1}^{-1} \cdot x_{i+1, j} \cdot x_{i, 1} \cdot\cdots\cdot x_{i, j}
      \end{array}\right.
      $$

      where $i = 1, \ldots, n-1$, and $j = 1,\ldots, d-1$.
      Other loops $x_{k, l}$ that do not appear in the list are fixed.
    \end{theorem}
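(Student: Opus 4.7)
The plan is to derive the formulas for the action of $\widetilde{\beta_i}$ on the generators $x_{k,j}$ by combining three ingredients: the edge-level action in Theorem~\ref{thm:lift}; the induced action on the base paths $p_k$; and the telescoping identity
\[
  x_{k, 1} \cdot x_{k, 2} \cdots x_{k, m} \;=\; p_k \cdot e_{k, 1} \cdot e_{k, m+1}^{-1} \cdot p_k^{-1},
\]
which follows by induction from $x_{k,l} = p_k \cdot e_{k,l} \cdot e_{k,l+1}^{-1} \cdot p_k^{-1}$, the intermediate factors $e_{k, l+1}^{-1} \cdot e_{k, l+1}$ cancelling successively. Taking inverses gives a compact expression for $x_{k,m-1}^{-1} \cdots x_{k,1}^{-1}$ as well.

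First I would compute $\widetilde{\beta_i}(p_k)$ for each $k$. Since the edges $e_{\ell, 1}$ with $\ell \notin \{i-1, i, i+1\}$ are fixed by Theorem~\ref{thm:lift}, one finds $\widetilde{\beta_i}(p_k) = p_k$ for all $k \le i-1$, while $\widetilde{\beta_i}(p_i) = p_i \cdot e_{i, 2}$ because $e_{i-1, 1} \mapsto e_{i-1, 1} \cdot e_{i, 2}$. The key cancellation is
\[
  \widetilde{\beta_i}(p_{i+1}) \;=\; p_i \cdot e_{i, 2} \cdot e_{i, 2}^{-1} \;=\; p_i,
\]
coming from $e_{i, 1} \mapsto e_{i, 2}^{-1}$. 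An immediate consequence is that $\widetilde{\beta_i}(p_k) = p_k$ for all $k \ge i+2$ as well, because the remaining edges $e_{\ell,1}$ are fixed.

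Next, for each $k \in \{i-1, i, i+1\}$ I would substitute the edge-level and path-level actions into the definition of $x_{k,j}$ and simplify. In the case $k = i-1$, for example, a direct computation using Theorem~\ref{thm:lift} gives
\[
  \widetilde{\beta_i}(x_{i-1, j}) \;=\; p_{i-1} \cdot e_{i-1, j} \cdot e_{i, j+1} \cdot e_{i, j+2}^{-1} \cdot e_{i-1, j+1}^{-1} \cdot p_{i-1}^{-1}.
\]
Inserting $p_i^{-1} \cdot p_i = e_{i-1,1}^{-1} \cdot p_{i-1}^{-1} \cdot p_{i-1} \cdot e_{i-1,1}$ on either side of the central block $e_{i, j+1} \cdot e_{i, j+2}^{-1} = p_i^{-1} \cdot x_{i, j+1} \cdot p_i$ factors this as a conjugate of $x_{i, j+1}$, whose outer pieces are recognized via the telescoping identity as $(x_{i-1, 1} \cdots x_{i-1, j-1})^{-1}$ and $x_{i-1, 1} \cdots x_{i-1, j}$, matching the first formula. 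The case $k = i$ uses $\widetilde{\beta_i}(p_i) = p_i \cdot e_{i, 2}$ together with $e_{i,j} \mapsto e_{i, j+1}^{-1}$ to produce the conjugators $x_{i, 2} \cdots x_{i, j}$ and $x_{i, j+1}^{-1} \cdots x_{i, 2}^{-1}$; the case $k = i+1$ exploits the cancellation $\widetilde{\beta_i}(p_{i+1}) = p_i$ together with $e_{i+1, j} \mapsto e_{i, j} \cdot e_{i+1, j}$ to yield the telescoping conjugators on the $x_{i,\bullet}$ side. Finally, for $k \notin \{i-1, i, i+1\}$ the computation of $\widetilde{\beta_i}(p_k)$ above and the fact that $e_{k, j}, e_{k, j+1}$ are fixed give $\widetilde{\beta_i}(x_{k,j}) = x_{k,j}$.

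The main obstacle I anticipate is purely the bookkeeping: inserting the correct copies of $p_i^{-1} \cdot p_i$ at the right places and matching the resulting edge-words against telescoped products of the $x_{k,l}$. The telescoping identity is what turns this into a systematic cancellation rather than an ad hoc one; beyond that, each of the three cases is a short symbolic manipulation.
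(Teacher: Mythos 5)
Your proposal matches the paper's proof essentially step for step: the paper likewise applies the edge-level action of Theorem~\ref{thm:lift} to $x_{k,j}=p_k\cdot(e_{k,j}\cdot e_{k,j+1}^{-1})\cdot p_k^{-1}$ and identifies the outer pieces via the telescoping identity $x_{k,1}\cdots x_{k,m}=p_k\cdot(e_{k,1}\cdot e_{k,m+1}^{-1})\cdot p_k^{-1}$, merely absorbing the edges $e_{i-1,1}, e_{i,1}$ into the conjugating paths in place rather than tabulating $\widetilde\beta_i(p_k)$ as a separate step. The one small slip is your justification that $\widetilde\beta_i(p_k)=p_k$ for $k\geq i+2$: the edge $e_{i+1,1}$ is \emph{not} fixed but maps to $e_{i,1}\cdot e_{i+1,1}$, and it is this together with $\widetilde\beta_i(p_{i+1})=p_i$ that restores $p_{i+2}$; the conclusion is nonetheless correct.
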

    \begin{proof}
      Note that $x_{i,1}\cdot \cdots \cdot x_{i,j} = p_i\cdot\left(e_{i,1}\cdot e_{i,j+1}^{-1}\right)\cdot p_i^{-1}$.
      \begin{align*}
        \widetilde\beta_i(x_{i-1,j}) & = p_{i-1}\cdot \widetilde\beta_i\left(e_{i-1,j}\cdot e_{i-1,j+1}^{-1}\right)\cdot p_{i-1}^{-1} \\
        & = p_{i-1}\cdot\left(e_{i-1,j}\cdot e_{i,j+1} \cdot e_{i,j+2}^{-1}\cdot e_{i-1,j+1}^{-1}\right)\cdot p_{i-1}^{-1} \\
        & = p_{i-1}\cdot\left(e_{i-1,j}\cdot e_{i-1,1}^{-1}\right)\cdot p_{i-1}^{-1} \cdot x_{i,j+1} \cdot p_{i-1} \cdot \left(e_{i-1,1}\cdot e_{i-1,j+1}^{-1}\right)\cdot p_{i-1}^{-1} \\
        & = x_{i-1, j-1}^{-1} \cdot\cdots\cdot x_{i-1, 1}^{-1} \cdot x_{i, j+1} \cdot x_{i-1, 1} \cdot\cdots\cdot x_{i-1, j}
      \end{align*}
      \begin{align*}
        \widetilde\beta_i(x_{i,j}) & = p_{i-1}\cdot \widetilde\beta_i\left(e_{i-1,1}\cdot e_{i,j}\cdot e_{i,j+1}^{-1} \cdot e_{i-1,1}^{-1}\right)\cdot p_{i-1}^{-1} \\
        & = p_{i-1}\cdot\left(e_{i-1,1}\cdot e_{i,2}\cdot e_{i,j+1}^{-1} \cdot e_{i,j+2}\cdot e_{i,2}^{-1} \cdot e_{i-1,1}^{-1}\right) \cdot p_{i-1}^{-1}\\
        & = p_{i}\cdot\left(e_{i,2}\cdot e_{i,j+1}^{-1}\right)\cdot p_{i}^{-1}\cdot p_{i}\left(e_{i,j+2}\cdot e_{i,2}^{-1}\right)\cdot p_{i}^{-1}\\
        & = x_{i, 2} \cdot\cdots\cdot x_{i, j} \cdot x_{i, j+1}^{-1} \cdot\cdots\cdot x_{i, 2}^{-1}
      \end{align*}
      \begin{align*}
        \widetilde\beta_i(x_{i+1,j}) & = p_{i-1}\cdot \widetilde\beta_i\left(e_{i-1,1}\cdot e_{i,1}\cdot e_{i+1,j}\cdot e_{i+1,j+1}^{-1}\cdot e_{i,1}^{-1}\cdot e_{i-1,1}^{-1}\right)\cdot p_{i-1}^{-1} \\
        & = p_{i-1}\cdot\left(e_{i-1,1}\cdot e_{i,j} \cdot e_{i+1,j}\cdot e_{i+1,j+1}^{-1}\cdot e_{i,j+1}^{-1} \cdot e_{i-1,1}^{-1}\right)\cdot p_{i-1}^{-1}\\
        & = p_i\cdot \left( e_{i,j}\cdot e_{i,1}^{-1} \right) \cdot p_i^{-1} \cdot x_{i+1,j} \cdot p_i \cdot \left( e_{i,1} \cdot e_{i,j+1}^{-1}\right) \cdot p_i^{-1}\\
        & = x_{i, j-1}^{-1} \cdot\cdots\cdot x_{i, 1}^{-1} \cdot x_{i+1, j} \cdot x_{i, 1} \cdot\cdots\cdot x_{i, j}
      \end{align*}
    \end{proof}

    The result of Theorem~\ref{thm:loops} may be expressed in more concise form using the conjugacy of elements in the free group.
    For this we introduce the loops $y_{i,j}$ :
    $$y_{i,j} = p_{i} \cdot e_{i,1} \cdot e_{i,j}^{-1} \cdot p_{i}^{-1} = x_{i,1} \cdot x_{i,2} \cdot\cdots\cdot x_{i, j-1}$$
    for $i = 1, \ldots, n-1$ and $j = 1, \ldots, d-1$. The loop $y_{i,j}$ is shown in Figure~\ref{fig:loopyij}.

    \begin{figure}[ht]
      \centering
      \definecolor{c00a8ff}{RGB}{0,168,255}
\definecolor{ca700ff}{RGB}{167,0,255}

\begin{tikzpicture}[y=0.80pt, x=0.80pt, yscale=-1.000000, xscale=1.000000, inner sep=0pt, outer sep=0pt]
\path[draw=c00a8ff,draw opacity=0.799,line width=2.400pt] (447.8340,161.9830) --
  (440.6400,156.4140) -- (447.2370,151.2000);
\path[draw=ca700ff,draw opacity=0.799,line width=2.400pt] (342.4240,122.2920) ..
  controls (339.2480,121.7240) and (335.9790,121.4280) .. (332.6400,121.4280) ..
  controls (302.1400,121.4280) and (277.4140,146.1530) .. (277.4140,176.6540);
\path[draw=ca700ff,draw opacity=0.799,line width=2.400pt] (387.8660,176.6540) ..
  controls (387.8660,149.4920) and (368.2580,126.9100) .. (342.4240,122.2920);
\path[draw=ca700ff,draw opacity=0.799,line width=2.400pt] (332.0760,114.9460) --
  (338.5440,121.6320) -- (331.4140,127.9090);
\path[draw=ca700ff,draw opacity=0.799,line width=2.400pt] (157.8900,121.8260) ..
  controls (154.7140,121.2580) and (151.4450,120.9620) .. (148.1060,120.9620) ..
  controls (117.6060,120.9620) and (92.8800,145.6870) .. (92.8800,176.1880);
\path[draw=ca700ff,draw opacity=0.799,line width=2.400pt] (203.3320,176.1880) ..
  controls (203.3320,149.0260) and (183.7240,126.4440) .. (157.8900,121.8260);
\path[draw=ca700ff,draw opacity=0.799,line width=2.400pt] (147.5420,114.4800) --
  (154.0100,121.1660) -- (146.8800,127.4430);
\path[draw=black,dash pattern=on 0.00pt off 3.20pt,line cap=round,draw
  opacity=0.799,line width=1.200pt] (217.0800,174.9600) -- (266.7600,174.9600);
\path[draw=c00a8ff,draw opacity=0.799,line width=2.400pt] (388.8000,177.1200) ..
  controls (388.8000,165.1910) and (413.4600,155.5200) .. (443.8800,155.5200) ..
  controls (474.3000,155.5200) and (498.9600,165.1910) .. (498.9600,177.1200);
\path[draw=ca700ff,draw opacity=0.799,line width=2.400pt] (453.5180,121.8260) ..
  controls (450.3420,121.2580) and (447.0730,120.9620) .. (443.7340,120.9620) ..
  controls (413.2340,120.9620) and (388.5080,145.6870) .. (388.5080,176.1880);
\path[draw=ca700ff,draw opacity=0.799,line width=2.400pt] (498.9600,176.1880) ..
  controls (498.9600,149.0260) and (479.3510,126.4440) .. (453.5180,121.8260);
\path[draw=ca700ff,draw opacity=0.799,line width=2.400pt] (443.1700,114.4800) --
  (449.6380,121.1660) -- (442.5080,127.4430);
\path[cm={{1.0,0.0,0.0,1.0,(430.92,92.88)}},fill=black] (0.0000,0.0000)
  node[above right] (text34) {$e_{i,1}$};
\path[cm={{1.0,0.0,0.0,1.0,(423.36,174.96)}},fill=black] (0.0000,0.0000)
  node[above right] (text38) {$e_{i,j}^{-1}$};
\path[cm={{1.4649,0.0,0.0,1.4649,(87.8591,183.6)}},fill=black] (0.0000,0.0000)
  node[above right] (text42) {$0$};
\path[cm={{1.192,0.0,0.0,1.192,(195.489,181.44)}},fill=black] (0.0000,0.0000)
  node[above right] (text46) {$1$};
\path[cm={{1.192,0.0,0.0,1.192,(270.744,180.863)}},fill=black] (0.0000,0.0000)
  node[above right] (text50) {$i-1$};
\path[cm={{1.192,0.0,0.0,1.192,(388.8,181.736)}},fill=black] (0.0000,0.0000)
  node[above right] (text54) {$i$};
\path[cm={{1.192,0.0,0.0,1.192,(494.64,181.44)}},fill=black] (0.0000,0.0000)
  node[above right] (text58) {$i+1$};

\end{tikzpicture}
      \caption{The loop $y_{i,j}$}
      \label{fig:loopyij}
    \end{figure}
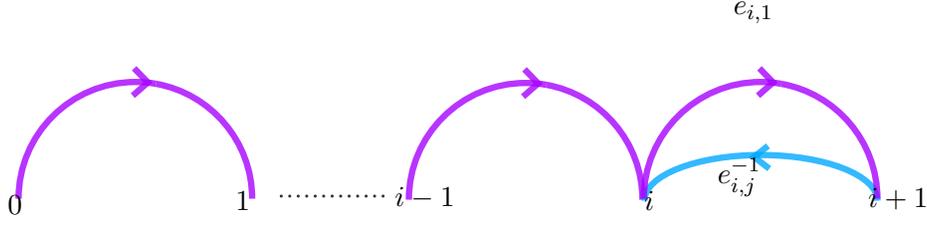

    \begin{corollary}
      \label{cor:yij}
      Let $\phi_d : B_n \rightarrow \aut F_{(d-1)(n-1)}$, $\beta_i \mapsto \widetilde{\beta_i}$ be the (faithful) representation induced by the $d$-fold covering over a disk with $n$ branch points.
      Then $\widetilde{\beta_{i}}$ acts on the generators $\{ x_{1,1}, \ldots, x_{n-1,d-1} \}$ of the free group as follows:

      $$
      \widetilde\beta_i:\left\{
      \begin{array}{rcl}
        x_{i-1, j} & \mapsto & y_{i-1,j}^{-1} \cdot x_{i, j+1} \cdot y_{i-1, j+1}\\
        \\
        x_{i, j} & \mapsto & x_{i,1}^{-1} \cdot y_{i,j+1} \cdot y_{i, j+2}^{-1} \cdot x_{i,1}\\
        \\
        x_{i+1, j} & \mapsto & y_{i,j}^{-1} \cdot x_{i+1,j} \cdot y_{i, j+1}

      \end{array}\right.
      $$
      Equivalently, we have
      $$
      \widetilde\beta_i:\left\{
      \begin{array}{rcl}
        x_{i-1, j} & \mapsto &  (x_{i, j+1})^{y_{i-1,j}} \cdot x_{i-1,j}\\
        \\
        x_{i, j} & \mapsto & (x_{i,j+1}^{-1})^{y_{i,j+1}^{-1} \cdot\, x_{i,1}}\\
        \\
        x_{i+1, j} & \mapsto & (x_{i+1,j})^{y_{i,j}} \cdot x_{i,j}

      \end{array}\right.
      $$
      Here, $x^y$ means the conjugate $y^{-1} x y$.
    \end{corollary}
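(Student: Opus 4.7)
The corollary is a reformulation of Theorem~\ref{thm:loops}, so the plan is to reduce it to that theorem by direct substitution, using only the definition
\begin{equation*}
  y_{i,j} = x_{i,1}\cdot x_{i,2}\cdots x_{i,j-1},
\end{equation*}
together with the immediate consequences $y_{i,1}=1$, $y_{i,j+1}=y_{i,j}\cdot x_{i,j}$, and the conjugation convention $a^{b}=b^{-1}ab$. No geometric input beyond Theorem~\ref{thm:loops} is needed.

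First I would rewrite each of the three lines of Theorem~\ref{thm:loops} in $y$-notation. For the $x_{i-1,j}$ line the left block $x_{i-1,j-1}^{-1}\cdots x_{i-1,1}^{-1}$ equals $y_{i-1,j}^{-1}$ and the right block $x_{i-1,1}\cdots x_{i-1,j}$ equals $y_{i-1,j+1}$, giving
\begin{equation*}
\widetilde\beta_i(x_{i-1,j}) = y_{i-1,j}^{-1}\cdot x_{i,j+1}\cdot y_{i-1,j+1}.
\end{equation*}
The $x_{i+1,j}$ line is treated identically. For the $x_{i,j}$ line I would insert a neutral factor $x_{i,1}^{-1}x_{i,1}$ on each end of $x_{i,2}\cdots x_{i,j}\cdot x_{i,j+1}^{-1}\cdots x_{i,2}^{-1}$; the outer groups then assemble into $y_{i,j+1}$ and $y_{i,j+2}^{-1}$, producing $x_{i,1}^{-1}\cdot y_{i,j+1}\cdot y_{i,j+2}^{-1}\cdot x_{i,1}$. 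This establishes the first of the two displayed systems.

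Next I would convert each expression into the conjugation form. For $x_{i-1,j}$, the identity $y_{i-1,j+1}=y_{i-1,j}\cdot x_{i-1,j}$ lets me factor $x_{i-1,j}$ off on the right, yielding $y_{i-1,j}^{-1}\,x_{i,j+1}\,y_{i-1,j}\cdot x_{i-1,j}=(x_{i,j+1})^{y_{i-1,j}}\cdot x_{i-1,j}$. The $x_{i+1,j}$ line is symmetric: factor off $x_{i,j}$ using $y_{i,j+1}=y_{i,j}\cdot x_{i,j}$ to obtain $(x_{i+1,j})^{y_{i,j}}\cdot x_{i,j}$. For the $x_{i,j}$ line I would use $y_{i,j+2}^{-1}=x_{i,j+1}^{-1}\,y_{i,j+1}^{-1}$ to rewrite $x_{i,1}^{-1}\cdot y_{i,j+1}\cdot y_{i,j+2}^{-1}\cdot x_{i,1}$ as $x_{i,1}^{-1}\,y_{i,j+1}\,x_{i,j+1}^{-1}\,y_{i,j+1}^{-1}\,x_{i,1}$, which is precisely $(x_{i,j+1}^{-1})^{y_{i,j+1}^{-1}\cdot x_{i,1}}$.

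Because everything is pure word manipulation in the free group $F_{(d-1)(n-1)}$ controlled by a single telescoping identity, there is no substantive obstacle; the only real risk is index bookkeeping, in particular making sure that products written as $x_{i,j+1}^{-1}\cdots x_{i,2}^{-1}$ are read in the stated (decreasing) order so that they correctly telescope to $y$-words.
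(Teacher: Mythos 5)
Your proposal is correct and matches the paper's (implicit) argument: the paper states Corollary~\ref{cor:yij} as a direct reformulation of Theorem~\ref{thm:loops} via the definition $y_{i,j}=x_{i,1}\cdots x_{i,j-1}$, which is exactly the substitution and telescoping you carry out. All three lines and both displayed forms check out, including the slightly delicate rewriting of $x_{i,1}^{-1}y_{i,j+1}y_{i,j+2}^{-1}x_{i,1}$ as $(x_{i,j+1}^{-1})^{y_{i,j+1}^{-1}\cdot x_{i,1}}$.
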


    The lift $\widetilde{\beta_i}$, regarded as an element in a mapping class group,
    may be expressed in terms of Dehn twists which generate the mapping class group.
    Let $D_{x_{i,j}}$ denote the Dehn twist along the loop $x_{i,j}$.
    Then the action of this Dehn twist on the covering groupoid is as follows :
    \begin{lemma}
      \label{lem:dehn}

      $$
      D_{x_{i,j}}:\left\{
      \begin{array}{rcl}
        e_{i-1, j} & \mapsto & e_{i-1, j} \cdot e_{i, j+1}, \\
        e_{i-1, k} & \mapsto & e_{i-1, k} \cdot e_{i,j} \qquad\textrm{ if } k\neq j, \\
        \\
        e_{i,j} & \mapsto & e_{i, j+1}^{-1}, \\
        e_{i, j+1} & \mapsto & e_{i,j}^{-1}, \\
        e_{i, k} & \mapsto & e_{i,j}^{-1} \cdot e_{i, k} \cdot e_{i,j}^{-1} \qquad \textrm{ if } k\neq j, j+1, \\
        \\
        e_{i+1, j+1} & \mapsto & e_{i, j+1} \cdot e_{i+1, j+1}, \\
        e_{i+1, k} & \mapsto & e_{i,j} \cdot e_{i+1, k} \qquad \textrm{ if } k\neq j+1.
      \end{array}\right.
      $$
    \end{lemma}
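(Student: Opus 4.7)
The plan is to compute $D_{x_{i,j}}$ geometrically, using the standard rule that a Dehn twist acts on a transverse arc by inserting the twist curve at each crossing.

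First, I would describe $x_{i,j}$ as a simple closed curve on $S_{g,b}^n$ bounding a bigon whose boundary consists of (parallel copies of) $e_{i,j}$ and $e_{i,j+1}$, with the branch points $i$ and $i+1$ in its interior. Let $N$ denote an annular tubular neighborhood of $x_{i,j}$; this carries the support of the Dehn twist, and every edge disjoint from $N$ is fixed by $D_{x_{i,j}}$.

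Next, I would classify the interaction of each generating edge with $N$, using the alternating order at vertices $i$ and $i+1$ from Figure~\ref{fig:pattern}. At vertex $i$, only $e_{i-1,j}$ enters the bigon side of $x_{i,j}$; the remaining $e_{i-1,k}$ with $k\neq j$ sit on the opposite side. Symmetrically at vertex $i+1$, only $e_{i+1,j+1}$ enters the bigon side. The edges $e_{i,j}$ and $e_{i,j+1}$ form the two parallel sides of the bigon, while $e_{i,k}$ for $k\neq j,j+1$ meet the opposite side at both endpoints. Every other edge of the covering groupoid lies entirely outside $N$.

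Then, I would apply the Dehn twist formula case by case: at each positive transverse crossing of an arc with $x_{i,j}$, I insert the loop $x_{i,j}=p_i\cdot e_{i,j}\cdot e_{i,j+1}^{-1}\cdot p_i^{-1}$. Since each affected edge has at least one endpoint at $i$ or $i+1$, the path $p_i$ in the inserted copy cancels with the portion of the underlying path connecting that endpoint to the basepoint, leaving a short local expression in $e_{i,j}$ and $e_{i,j+1}$ that should match the formulas in the statement after reading off the correct sign from the orientation.

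The main obstacle will be tracking orientations and verifying that each case reduces precisely to the stated formula, in particular for $e_{i,k}$ with $k\neq j,j+1$, where both endpoints are dragged around $x_{i,j}$ and the double insertion must be simplified. A reliable way to do this is to work inside the atomic surface $S_i$ of Figure~\ref{fig:gluing}, which is planar; there the transverse crossings and the sign of each crossing can be read off directly from the cell decomposition, and the resulting concatenations simplify by inspection.
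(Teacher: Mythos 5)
The paper states Lemma~\ref{lem:dehn} with no proof at all, so there is nothing to compare your argument to line by line; your general setup (realize $x_{i,j}$ as the simple closed curve $e_{i,j}\cdot e_{i,j+1}^{-1}$, take an annular neighborhood $N$, and use the alternating edge order at the vertices $i$ and $i+1$ to sort the edges into the cases of the statement) is the right one, and your identification of $e_{i-1,j}$ and $e_{i+1,j+1}$ as the two distinguished edges is exactly what the gluing pattern of Figure~\ref{fig:pattern} gives. Two small corrections to the setup: the branch points $i$ and $i+1$ are the corners of the bigon, not interior points of it, and the bigon is not an embedded disk disjoint from the rest of the graph, since $e_{i-1,j}$ and $e_{i+1,j+1}$ enter it --- your case analysis already accounts for this, but the description should match.

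The genuine gap is in the computational device you propose. The rule ``insert a based copy of the twist curve at each positive transverse crossing'' applies to arcs whose endpoints are fixed by the twist and which meet the curve transversally; it always preserves the endpoints of the arc. But in Lemma~\ref{lem:dehn} the generating edges do not cross $x_{i,j}$ transversally --- they terminate at the vertices $i$ and $i+1$, which lie \emph{on} the core of the twisting annulus --- and the stated formulas visibly interchange those endpoints: $e_{i,j}\mapsto e_{i,j+1}^{-1}$ sends an edge from $i$ to $i+1$ to an edge from $i+1$ to $i$, and $e_{i,k}\mapsto e_{i,j}^{-1}\cdot e_{i,k}\cdot e_{i,j}^{-1}$ is not a conjugate, which no insertion-at-crossings computation can produce. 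What is actually needed is the explicit annulus model $(\theta,t)\mapsto(\theta+2\pi t,\,t)$ with core $e_{i,j}\cup e_{i,j+1}$: the core is rotated half-way around, so the vertices $i$ and $i+1$ (sitting antipodally on the core) are swapped, each half of the core is carried to the other half reversed (giving $e_{i,j}\mapsto e_{i,j+1}^{-1}$, $e_{i,j+1}\mapsto e_{i,j}^{-1}$), and each edge-germ ending on the core is dragged half-way around along $e_{i,j}$ or $e_{i,j+1}$ according to which sector of the vertex it occupies. Carrying out your plan as literally written would instead produce an endpoint-fixing functor (isotopic as a homeomorphism, but a different groupoid functor), which would then fail to compose correctly with Theorem~\ref{thm:lift} in the verification of Theorem~\ref{thm:Dehn}. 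Also note that the conjugating paths $p_i$ in your inserted loop are irrelevant to the action on edges and would introduce spurious factors; the computation should be done with the unbased curve in the atomic surface, as you suggest in your final paragraph.
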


    From Theorem~\ref{thm:lift} and Lemma~\ref{lem:dehn}, by hand calculation (or computer program),
    we can see that $\widetilde\beta_i$ equals the product of $d-1$ Dehn twists (not the inverse of Dehn twists as in \cite{KimSong2018}) :
    \begin{theorem}
      \label{thm:Dehn}
      The lift $\widetilde\beta_i$ equals $D_{x_{i, 2}}\cdot D_{x_{i,3}}\cdot \cdots \cdot D_{x_{i,d}}$ as an element of mapping class group.
    \end{theorem}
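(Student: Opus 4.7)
The strategy is to prove equality of both sides as self-functors of the covering groupoid $E$. Since $\Gamma_{g,b}$ with $b \geq 1$ embeds into $\aut(\pi_1(S_{g,b}))$ (with basepoint on the boundary), and since the covering groupoid generates the fundamental groupoid of a tubular neighborhood of the graph $E$, it suffices to check that $\widetilde{\beta}_i$ and $D_{x_{i,2}} \cdot D_{x_{i,3}} \cdot \cdots \cdot D_{x_{i,d}}$ act identically on every edge $e_{k,m}$ of $E$. For edges outside the atomic surface $S_i$, that is $k \notin \{i-1, i, i+1\}$, both sides fix $e_{k,m}$ (the left by Theorem~\ref{thm:lift}; the right because each $x_{i,j}$ lies entirely inside $S_i$ by construction, and by Lemma~\ref{lem:dehn}).

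The heart of the argument is thus to verify the identity on the three remaining families $e_{i-1,m}$, $e_{i,m}$, and $e_{i+1,m}$. I would do this inductively. Define the partial products
\[
\Phi_\ell := D_{x_{i,\ell}} \cdot D_{x_{i,\ell+1}} \cdot \cdots \cdot D_{x_{i,d}}, \qquad \ell = 2, \ldots, d,
\]
so that $\Phi_d$ is a single Dehn twist and $\Phi_2$ is the full product in the theorem. The plan is to compute $\Phi_\ell(e_{k,m})$ in closed form by induction on $\ell$ descending from $d$ to $2$, using Lemma~\ref{lem:dehn} at each step. The base case is direct from the lemma. For the inductive step, one applies one further Dehn twist $D_{x_{i,\ell-1}}$ to the previously computed word and collects terms, expecting that adjacent $e_{i,*}$ and $e_{i,*}^{-1}$ factors cancel so that the resulting word remains short.

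When the induction terminates at $\ell = 2$, the closed form of $\Phi_2(e_{k,m})$ should match the three cases of Theorem~\ref{thm:lift} exactly. This final matching is the crux: once the formula for $\Phi_2$ is obtained, comparison with $\widetilde{\beta}_i$ is a triviality.

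The main obstacle is the bookkeeping in the inductive step. The third rule in Lemma~\ref{lem:dehn}, namely $e_{i,k} \mapsto e_{i,j}^{-1} \cdot e_{i,k} \cdot e_{i,j}^{-1}$ for $k \neq j, j+1$, is nonobvious in form and does not look like a plain conjugation, so the resulting words initially grow as the Dehn twists are stacked; the cancellations that keep them controlled depend on the precise cyclic pattern of indices $j, j+1$ produced by the chain $x_{i,2}, x_{i,3}, \ldots, x_{i,d}$. I would organize the calculation by fixing $m$ and tracking, for each $\ell$, only which $e_{i,*}^{\pm 1}$ letters appear at the ends of the word; this should make the telescoping transparent and prevent the intermediate expressions from hiding the pattern. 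A verification for small $d$ (say $d = 3, 4$) by direct computation would serve both as a consistency check and as a template for the general inductive argument.
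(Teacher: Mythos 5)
Your approach coincides with the paper's: the authors prove this theorem only by remarking that it follows ``from Theorem~\ref{thm:lift} and Lemma~\ref{lem:dehn}, by hand calculation (or computer program)'', which is precisely the edge-by-edge comparison on the covering groupoid that you organize into an induction on the partial products $\Phi_\ell$. Your plan is if anything more explicit than what the paper records; the one point to pin down before running the induction is the composition convention (whether $D_{x_{i,2}}$ or $D_{x_{i,d}}$ is applied to an edge first), since the telescoping cancellations you are counting on occur only for one of the two orders, and your proposed $d=3,4$ spot-checks will settle that.
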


    This theorem implies that the embedding $\phi_d:B_n\rightarrow \Gamma_{g,b}$ induced by $d$-fold covering is nongeometric for each $d\geq 3$.

\section*{Acknowledgments}
  The third author was supported by the Korean National Research Fund NRF-2016R1D1A1B03934531.

\end{document}